\documentclass[12pt,reqno]{amsart}
\usepackage[utf8]{inputenc}
\usepackage[T1]{fontenc}
\usepackage{a4wide}
\usepackage{amsmath}
\usepackage{amsthm}
\usepackage{amsfonts}
\usepackage{amssymb}
\usepackage{graphicx}
\usepackage[stable]{footmisc}
\usepackage{xcolor}
\usepackage[colorlinks,linkcolor=teal,citecolor=teal]{hyperref}

\numberwithin{equation}{section}
\newtheorem*{remark}{Remark}
\newtheorem{definition}{Definition}
\newtheorem{lemma}{\bf Lemma}[section]
\newtheorem{prop}{\bf Proposition}[section]
\newtheorem{thm}{\bf Theorem}[section]

\begin{document}
	\title{Formally Integrable Structures II. Division Problem}
	\author{Qingchun Ji}
	\address{School of Mathematical Sciences, Fudan University, Shanghai 200433, China}
	\email{qingchunji@fudan.edu.cn}
	\author{Jun Yao}
	\address{School of Mathematical Sciences, Fudan University, Shanghai 200433, China}
	\email{jyao\_fd@fudan.edu.cn}
	\thanks{This work was partially supported by National Natural Science Foundation of China, No. 12071081 and No. 11801366.}
	\subjclass[2020]{35N10, 32W50, 35A27}
	\keywords{Division problem, effective criterion, coherence theorem, elliptic systems}
	\date{}

\begin{abstract}
	We formulate a division problem for a class of overdetermined systems introduced by L. H{\"o}rmander, and establish an effective divisibility criterion. In addition, we prove a coherence theorem which extends Nadel's coherence theorem from complex structures to elliptic systems of partial differential equations.
\end{abstract}
\maketitle

\section{Introduction}
As a generalization of the Newlander-Nirenberg theorem, L. Nirenberg introduced the notion of Levi flat structures and proved that every Levi flat structure is in fact locally integrable(\cite{N}). Later, L. H{\"o}rmander(\cite{H1}) considered, under involutivity conditions inspired by Levi flat structures, the following general overdetermined system for the unknown $u$
\begin{equation}\label{OS}
	P_\alpha u=f_\alpha, 1\leq \alpha\leq r,
\end{equation} 
where $P_\alpha$'s are first order differential operators in an open subset $ \Omega\subseteq\mathbb{R}^n $ given by
\begin{equation}\label{operator}
	P_\alpha=\sum_{\sigma=1}^{n}a_\alpha^\sigma\partial_\sigma+a_\alpha^0,
\end{equation}
where $a_\alpha^\sigma\in C^{\infty}(\Omega)$ for $0\leq \sigma\leq n,1\leq \alpha \leq r$ and $ \partial_\sigma = \partial/\partial x_\sigma $ for $ 1\leq \sigma\leq n $. In \cite{JYY}, we constructed a resolution of the solution sheaf $\mathcal{S}_P$ which consists of germs locally square-integrable solutions of the homogeneous system $Pu=0$ where $P:=(P_1,\cdots,P_r)$. We also formulated Cousin type problems for the system ({\ref{OS}) based on which we gave sheaf-theoretical approaches to gluing local solutions of ({\ref{OS}). For recent progress on Levi flat structures, we refer to \cite{G}, \cite{HT}, \cite{S}, \cite{Web},\cite{W} and references therein.

%Use  Let $ r $ and $ N $ be positive integers, whose coefficients are smooth $ N\times N $-complex matrix valued functions. 
%Overdetermined systems satisfying various integrability conditions have been intensively investigated via both algebraic and geometric methods to find symmetry structures on their solution spaces(conserved quantities, Hamiltonian structures, monodromy representation, etc), and there have been effective ways to produce explicit solutions. 
%For results of this nature, we refer readers to \cite{Gu}, \cite{GH}, \cite{GHZ} and \cite{KS}.

%In this paper, we allow $P_j(1\leq j\leq r) to be real-linear, i.e., for any complex-valued $u\in C^{\infty}(\Omega)
%\begin{equation}
	%P_ju=\sum_{\nu=1}^{n}a_j^\nu\partial_\nu u+a_j^\prime u+a_j^{\prime\prime}\bar{u},%\label{P}
%\end{equation}
%where $a_j^\nu, a_j^{\prime}, a_j^{\prime\prime}\in C^{\infty}(\Omega) for $0\leq \nu\leq n,1\leq j \leq r$ and $\bar{u}$ is the complex-conjugate of $u$.

Skoda's result on ideal generation(see \cite{Djp82}, \cite{Sh72}, \cite{Sh78} and \cite{Syt18}) has played a great role in algebraic geometry and complex geometry. Siu(see \cite{Syt98,Syt02,Syt04,Syt05,Syt09}) used Skoda's theorem to establish the remarkable deformation invariance of plurigenera and proved the finite generation of canonical ring of compact complex algebraic manifolds of general type. There have been many generalizations of Skoda's division theorem, e.g., division theorems for the Koszul complex and the exact sequences of holomorphic vector bundles in \cite{Djp82}, \cite{Jqc12}, \cite{Jqc13} and \cite{Kd}, twisted version Skoda's estimate in \cite{Vd08}.

This paper is devoted to the division problem for a Koszul type complex of the solution sheaf $\mathcal{S}_P$ of (\ref{OS}) under the assumption that the operator $P$ is elliptic(Theorem \ref{division thm})
\begin{align*}
	0\longrightarrow\bigwedge^m\mathcal{S}_P^{\oplus m}\stackrel{\iota_h^\star}{\longrightarrow}\bigwedge^{m-1}\mathcal{S}_P^{\oplus m}\stackrel{\iota_h^\star}{\longrightarrow}\cdots\stackrel{\iota_h^\star}{\longrightarrow}\mathcal{S}_P^{\oplus m}\stackrel{\iota_h^\star}{\longrightarrow}\mathcal{S}_P\longrightarrow0,
\end{align*}
where $\iota_h^\star$ means the interior product with $h\in\mathcal{S}_P^{\oplus m}$(see (\ref{eoc.})). In addition, we show the coherence of $\left(\mathcal{S}_P,\star,+\right)$-subsheaves $\mathcal{I}_\phi^\cdot$ of $\bigwedge^\cdot\mathcal{S}_P^{\oplus m}$(Definition \ref{ideal}), which is a generalization of Nadel's coherence theorem for multiplier ideal sheaves with respect to complex structures(Proposition \ref{coherent}).

We will adopt the summation convention over repeated indices,  and arrange that the indices $\sigma,\delta$ run from $1$ to $n$, indices $i,j,k$ run from $1$ to $m$, indices $\alpha,\beta,\gamma$ run from $1$ to $r$, indices $\mu,\nu$ run from $1$ to $r-s$($s\leq r$ is an integer to be described later). Moreover, every multi-index $ J=(j_1,\cdots,j_q) $ has its  components in $\{1,\cdots,m\}$.

\section{Preliminaries}
Following \cite{H1}, we will always assume the following conditions on these differential operators (\ref{operator}) occurring in the overdetermined system (\ref{OS}) under consideration.
For all $ 1\leq \alpha,\beta\leq r$, \begin{align}
	\left[P_\alpha,P_\beta\right] &= c_{\alpha\beta}^\gamma P_\gamma \ \ {\rm and } \ 
	c_{\alpha\beta}^\gamma=-c_{\beta\alpha}^\gamma,\label{integrability condition}\\ 
	\left[p_\alpha,\bar{p}_\beta\right] & =d_{\alpha\beta}^\gamma p_\gamma-e_{\alpha\beta}^\gamma\bar{p}_\gamma, \label{condition of the Lie braket}
\end{align}
where  $ c_{\alpha\beta}^\gamma, d_{\alpha\beta}^\gamma, e_{\alpha\beta}^\gamma  \in C^{\infty}(\Omega) $,  $p_\alpha$ is the principal part of $P_\alpha$ and $ \bar{p}_\alpha $ is the complex conjugate of $ p_\alpha $, i.e.,
$\bar{p}_\alpha=\bar{a}_\alpha^\sigma\partial_\sigma(1\leq \alpha,\beta,\gamma\leq r)$. We also assume that $P=(P_1,\cdots,P_r)$ is elliptic in this paper.

Recall that for a real valued function $ \varphi\in C^2(\Omega) $  and $\mathfrak{p}\in \Omega$, the quadratic form $Q_{\varphi,\mathfrak{p}}$ on $ \mathbb{C}^{r}$ is defined as
	\begin{equation}
        Q_{\varphi,\mathfrak{p}}(\xi,\xi)={\rm Re}\left(p_\alpha\bar{p}_\beta\varphi(\mathfrak{p})+e_{\alpha\beta}^\gamma(\mathfrak{p})\bar{p}_\gamma\varphi(\mathfrak{p})\right)\xi_{\beta}\bar{\xi}_{\alpha}	\label{quadratic form}
    \end{equation}
    for all $ \xi=(\xi_\alpha)_{1\leq \alpha\leq r}\in\mathbb{C}^{r} $. If there exists an exhaustion function $\varphi\in C^2(\Omega)$ such that $Q_{\varphi,\mathfrak{p}}$ is positive define for every $\mathfrak{p}\in \Omega$, then $\Omega$ is said to be \emph{$P$-convex}. Given a point $\mathfrak{p}_0\in \Omega$, let $\varphi\in C^2(\Omega)$ be an arbitrary function vanishing at $\mathfrak{p}_0$ up to first order derivatives, it's observed in \cite{JYY} that
\begin{equation}
	Q_{\varphi,\mathfrak{p}_0}(\xi,\xi)=a_\alpha^\sigma(\mathfrak{p}_0)\xi_\alpha\overline{a_\beta^\delta(\mathfrak{p}_0)\xi_\beta}\partial_\sigma\partial_\delta \varphi(\mathfrak{p}_0)\label{local}.
\end{equation}
This observation will be used in the sequel. The ellipticity of the operator $P$ implies that
\begin{equation*}
	\mathcal{S}_P\subseteq\mathcal{E}
\end{equation*} 
where $\mathcal{E}$ is the sheaf of germs of smooth functions over $\Omega$. As noted in \cite{JYY}, when $\Omega$ is $P$-convex there is a function $\eta\in C^\infty(\Omega)$ such that for $ 1\leq \alpha\leq r$ 
\begin{equation*}
	p_\alpha\eta=a^0_\alpha,%\label{eta1}
\end{equation*}
i.e.,
\begin{equation}\label{eta}
	P_\alpha=e^{-\eta}p_\alpha(e^\eta \ \cdot).
\end{equation}
Fix such a function $\eta$, we can define a multiplication on $\mathcal{S}_P$ as we did in \cite{JYY}
\begin{equation}
	u\star v:=e^\eta uv,\label{star}
\end{equation}
which satisfies 
\begin{equation*} 
	P_\alpha(u\star v)=P_\alpha u\star v+u\star P_\alpha v, \ 1\leq j\leq r.%\label{leb}
\end{equation*}
Together with the usual addition $+$, $\left(\mathcal{S}_P,\star,+\right)$  is a sheaf of rings.

\begin{definition}
	Let $\phi$ be a real-valued measurable function in $\Omega$ which is locally bounded from above. $\phi$ is said to be a $P$-convex function, if for any open subset $U\Subset\Omega$, there exists a decreasing sequence of real-valued functions $\{\phi_\iota\}_{\iota=1}^\infty$ in $C^2\left(U\right)$, such that the quadratic forms $\{Q_{\phi_\iota,\mathfrak{p}}\}_{\iota=1}^\infty$ defined by (\ref{quadratic form}) are all non-negative at every point $\mathfrak{p}\in U$, and $\phi_\iota\longrightarrow\phi$ almost everywhere in $U$ as $\iota\rightarrow\infty$.
\end{definition}

If $\phi$ is locally integrable in $\Omega$, then a $P$-convex function $\phi$ has nonnegative quadratic form $Q_{\phi,\mathfrak{p}}$ in the sense of distribution theory.
Given a $P$-convex function $\phi$ and $0\leq \ell\leq m$, we introduce the following subsheaf $\mathcal{I}_\phi^\ell$  of $\bigwedge^\ell\mathcal{S}_P^{\oplus m}$.

\begin{definition}\label{ideal}
	Let $\phi$ be a $P$-convex function in $\Omega$ and $0\leq \ell\leq m$,  we define $\mathcal{I}_\phi^\ell$  as the subsheaf  of $\bigwedge^\ell\mathcal{S}_P^{\oplus m}$  consisting  of germs $f\in \bigwedge^\ell\mathcal{S}_{P,\mathfrak{p}}^{\oplus m}$ such that $|f|^2e^{-\phi}$ is integrable in a neighborhood of $\mathfrak{p}\in\Omega$.
\end{definition}

\section{The division problem}
We begin by formulating the division problem. For an arbitrary $h=\left(h_1,\cdots,h_m\right)\in\Gamma\left(\Omega,\mathcal{S}_P\right)^{\oplus m}$, we have the following Koszul type complex
\begin{align}\label{qkc}
	0\longrightarrow\bigwedge^m\mathcal{S}_P^{\oplus m}\stackrel{\iota_h^\star}{\longrightarrow}\cdots
	\stackrel{\iota_h^\star}{\longrightarrow}\bigwedge^\ell\mathcal{S}_P^{\oplus m}\stackrel{\iota_h^\star}{\longrightarrow}\bigwedge^{\ell-1}\mathcal{S}_P^{\oplus m}\stackrel{\iota_h^\star}{\longrightarrow}\cdots\stackrel{\iota_h^\star}{\longrightarrow}\mathcal{S}_P\longrightarrow0.
\end{align}
For $1\leq \ell\leq m$, $u=\left(u_J\right)_{|J|=\ell}\in\bigwedge^\ell\mathcal{S}_P^{\oplus m}$ where $J$ are strictly increasing multi-indices, $\iota_h^\star:\bigwedge^\ell\mathcal{S}_P^{\oplus m}\rightarrow\bigwedge^{\ell-1}\mathcal{S}_P^{\oplus m}$ is defined as
\begin{align}\label{eoc.}
	\iota_h^\star u=\bigg(\sum_{j\notin I}h_j\star u_{jI}\bigg)_{|I|=\ell-1},
\end{align}
where $\star$ is defined by (\ref{star}). It's obvious that $\left(\iota_h^\star\right)^2=0$. In the case $\eta=0$, we denote $\iota_h^\star$ simply by $\iota_h$.

The global division problem for the complex (\ref{qkc}) means to find the sufficient condition of the global exactness of (\ref{qkc}), i.e., for a given $f\in\bigwedge^{\ell-1}\Gamma\left(\Omega,\mathcal{S}_P\right)^{\oplus m}$ with $\iota_h^\star f=0$, we are looking for a sufficient condition on $f$ for the existence of some $u\in\bigwedge^{\ell}\Gamma\left(\Omega,\mathcal{S}_P\right)^{\oplus m}$ such that $\iota_h^\star u=f$ which is equivalent to
\begin{align}\label{ee.}
	\iota_{e^\eta h} \left(e^\eta u\right)=e^\eta f.
\end{align}
The global division problem for $\iota_h^\star:\mathcal{S}_P^{\oplus m}\rightarrow\mathcal{S}_P$(i.e., $\ell=1$) corresponds to the ideal generation problem in the ring $\Gamma\left(\Omega,\mathcal{S}_P\right)$. 

On the other hand, by (\ref{eta}) we have
\begin{align}\label{Pp}
	\mathcal{S}_p=\{e^\eta g\ |\ g\in\mathcal{S}_P\},
\end{align}
where $p:=\left(p_1,\cdots,p_r\right)$  and $\mathcal{S}_p$ is the sheaf of germs of solutions with respect to the operator $p$. By definition, $e^\eta h\in\Gamma\left(\Omega,\mathcal{S}_p\right)^{\oplus m}$ and $e^\eta f\in\bigwedge^{\ell}\Gamma\left(\Omega,\mathcal{S}_p\right)^{\oplus m}$. Since $\mathcal{S}_p$ is a sheaf of rings with respect to the usual addition and multiplication, we can introduce the division problem for $p$ as above. According to (\ref{ee.}), if the division problem is solvable for $p$, then the original problem for $P$ can also be
solved. Hence, we restrict to the special case $\eta=0$(i.e., $p=P$).

In order to establish a division theorem for (\ref{qkc}), we will make use of the following Riesz representation theorem(see \cite{Sh72} and \cite{Vd08}).
\begin{lemma}\label{riesz}
	Let $H,H_1,H_2,H_3$ be Hilbert spaces, $R:H_1\longrightarrow H$ be a bounded linear operator, $ T:H_1\longrightarrow H_2 $ and $ S:H_2\longrightarrow H_3 $ be linear, closed, densely defined operators such that $ {\rm Im}T\subseteq {\rm Ker}S $. Let $D\subseteq H$ be a closed space satisfying $R\left({\rm Ker}T\right)\subseteq D$. Then for any $f\in D$ and constant $C>0$, the following statements are equivalent.
	\begin{enumerate}
		\item[(1)] There exists at least one $u\in{\rm Ker}T$ such that $Ru=f$ and $\|u\|_{H_1}\leq C$.
		
		\item[(2)] $|\left(f,v\right)_H|\leq C\|R^*v+T^*w\|_{H_1}$ hold for all $v\in D,\ \text{and} \ w\in{\rm Ker}S\cap{\rm Dom}\left(T^*\right)$.
	\end{enumerate}
\end{lemma}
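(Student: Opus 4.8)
The statement to prove is Lemma \ref{riesz}, the functional-analytic Riesz-type representation lemma. Let me sketch how I would prove it.

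\textbf{Proof proposal.}

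The plan is to deduce the equivalence from the Hahn--Banach theorem together with a standard duality argument, in the spirit of the classical $L^2$-existence theorems (H\"ormander, Skoda, Demailly). The implication $(1)\Rightarrow(2)$ is the easy direction: assuming $u\in{\rm Ker}\,T$ with $Ru=f$ and $\|u\|_{H_1}\le C$, for any $v\in D$ and $w\in{\rm Ker}\,S\cap{\rm Dom}(T^*)$ I would write $(f,v)_H=(Ru,v)_H=(u,R^*v)_{H_1}$, and since $u\in{\rm Ker}\,T$ we also have $(u,T^*w)_{H_1}=(Tu,w)_{H_2}=0$; adding these gives $(f,v)_H=(u,R^*v+T^*w)_{H_1}$, and the Cauchy--Schwarz inequality together with $\|u\|_{H_1}\le C$ yields $|(f,v)_H|\le C\|R^*v+T^*w\|_{H_1}$.

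For the substantive direction $(2)\Rightarrow(1)$, I would first verify that the map $R^*v+T^*w\mapsto(f,v)_H$ is a well-defined linear functional on the subspace $L:=\{\,R^*v+T^*w:\ v\in D,\ w\in{\rm Ker}\,S\cap{\rm Dom}(T^*)\,\}\subseteq H_1$. Well-definedness is exactly what hypothesis (2) buys: if $R^*v_1+T^*w_1=R^*v_2+T^*w_2$ then applying (2) to $v=v_1-v_2$ (which lies in $D$ since $D$ is a subspace) and $w=w_1-w_2$ gives $|(f,v_1-v_2)_H|\le C\cdot 0=0$, so the functional does not depend on the representation. Hypothesis (2) also shows this functional is bounded on $L$ with norm $\le C$. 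By the Hahn--Banach theorem it extends to a bounded functional on all of $H_1$ with the same norm, and by the Riesz representation theorem there is $u\in H_1$ with $\|u\|_{H_1}\le C$ such that $(R^*v+T^*w,u)_{H_1}=\overline{(f,v)_H}$ for all admissible $v,w$; after the routine conjugation bookkeeping this reads $(u,R^*v+T^*w)_{H_1}=(f,v)_H$.

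It remains to extract from this single identity the two conclusions $Ru=f$ and $u\in{\rm Ker}\,T$. Taking $w=0$ gives $(Ru,v)_H=(u,R^*v)_{H_1}=(f,v)_H$ for all $v\in D$; since $Ru\in D$ by the hypothesis $R({\rm Ker}\,T)\subseteq D$ once we know $u\in{\rm Ker}\,T$ — so I would instead argue in the right order: take $v=0$ first, obtaining $(u,T^*w)_{H_1}=0$ for all $w\in{\rm Ker}\,S\cap{\rm Dom}(T^*)$. To conclude $u\in{\rm Ker}\,T$ from this I need that $T^*\big({\rm Ker}\,S\cap{\rm Dom}(T^*)\big)$ is dense in $\overline{{\rm Im}\,T^*}=({\rm Ker}\,T)^{\perp}$; this is the one point requiring the hypothesis ${\rm Im}\,T\subseteq{\rm Ker}\,S$, which guarantees ${\rm Im}\,T^*\subseteq\overline{{\rm Im}\,T^*}$ is spanned using only $w\in{\rm Ker}\,S$ (indeed for $w\perp{\rm Ker}\,S$, $T^*w$ contributes nothing because... — this is the delicate bit). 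Granting it, $u\perp({\rm Ker}\,T)^\perp$, hence $u\in{\rm Ker}\,T$. Then $Ru\in D$, and taking $w=0$ in the identity gives $(Ru-f,v)_H=0$ for all $v\in D$; since $Ru-f\in D$ (both $Ru$ and $f$ lie in $D$), choosing $v=Ru-f$ forces $Ru=f$. The main obstacle, and the step I would be most careful about, is precisely this last density/orthogonality argument showing that testing against $T^*w$ only for $w\in{\rm Ker}\,S$ still detects all of $({\rm Ker}\,T)^\perp$; everything else is bookkeeping with Hahn--Banach and Riesz.
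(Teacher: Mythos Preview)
The paper does not give its own proof of this lemma; it is stated with references to Skoda \cite{Sh72} and Varolin \cite{Vd08}. Your approach is the standard one used in those sources and is correct, so there is nothing to compare against beyond the step you yourself flagged as incomplete.

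The ``delicate bit'' you left open is in fact a short observation. Since ${\rm Im}\,T\subseteq{\rm Ker}\,S$ and ${\rm Ker}\,S$ is closed, every $w\in({\rm Ker}\,S)^\perp$ satisfies $(Tx,w)_{H_2}=0$ for all $x\in{\rm Dom}(T)$; by definition of the adjoint this means $w\in{\rm Dom}(T^*)$ with $T^*w=0$. Hence for an arbitrary $w\in{\rm Dom}(T^*)$, decompose $w=w_1+w_2$ with $w_1\in{\rm Ker}\,S$ and $w_2\in({\rm Ker}\,S)^\perp$; then $w_2\in{\rm Ker}\,T^*$, so $w_1=w-w_2\in{\rm Dom}(T^*)$ and $T^*w_1=T^*w$. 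Consequently $T^*\big({\rm Ker}\,S\cap{\rm Dom}(T^*)\big)={\rm Im}\,T^*$ (equality, not just density), and your identity $(u,T^*w)_{H_1}=0$ extends to all $w\in{\rm Dom}(T^*)$. Since $T$ is closed, $T^{**}=T$, whence $u\in{\rm Ker}\,T$. The rest of your argument---$Ru\in D$, then $(Ru-f,v)_H=0$ for all $v\in D$, then $v=Ru-f$---goes through verbatim.
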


For $1\leq \ell\leq m$, we will apply the lemma to the following Hilbert spaces and operators(note that we temporarily assume $\eta=0$): 
\begin{align*}
	H_1=\bigwedge^\ell L_{\phi_1}^2(\Omega)^{\oplus m},&\ H_2=\bigwedge^\ell L_{\phi_2}^2(\Omega)^{\oplus m\tbinom{r}{1} },\ H_3=\bigwedge^\ell L_{\phi_3}^2(\Omega)^{\oplus m\tbinom{r}{2}},\\ 
	H=\bigwedge^{\ell-1}L_{\varphi_2}^2(\Omega)&^{\oplus m},\ D=\left\{v\in\bigwedge^{\ell-1}\Gamma\left(\Omega,\mathcal{S}_P\right)^{\oplus m}\ |\ \iota_{h}v=0\right\},\\
	R=\iota_{h}, \ T \ {\rm and}& \ S\ \text{are given by}\ (3.7)\ \text{in \cite{JYY}}\ \text{with}\ q=0.
\end{align*}
where 
\begin{align}\label{coo}
	\phi_1,\phi_2,\phi_3\ \text{satisfy the hypotheses of Lemma 3.3 in \cite{JYY}},\ \varphi_2-\phi_1=\log|h|^2.
\end{align}
It's easy to see that ${\rm Ker}T\subseteq\bigwedge^\ell\Gamma\left(\Omega,\mathcal{S}_P\right)^{\oplus m}$, and therefore $R\left({\rm Ker}T\right)\subseteq D$. For $v\in\bigwedge^{\ell-1}\Gamma\left(\Omega,\mathcal{S}_P\right)^{\oplus m}$ we have
\begin{align}\label{aoc}
	R^*v:=e^{\phi_1-\varphi_2}\bar{h}\wedge v:=e^{\phi_1-\varphi_2}\bigg(\sum_{j\in J}(-1)^{\left(j,J\setminus j\right)}\bar{h}_jv_{J\setminus j}\bigg)_{|J|=\ell}.
\end{align}

\begin{remark}
	As we only make use of the first two differential operators of the complex (1.4) in \cite{JYY}, $S\circ T=0$ always holds without the assumption (A2) there.
\end{remark}

In the special case where $\eta=0$, we have the following
\begin{lemma}\label{division thm.0}
	Assume that $P$ is an elliptic operator, $\Omega$ is a $P$-convex domain and that $\eta=0$.
	If $h_1,\cdots,h_m\in\Gamma(\Omega,\mathcal{S}_P)$ are functions with no common zeros, then for any $P$-convex function $\phi$ in $\Omega$ and $f\in\Gamma(\Omega,\mathcal{I}_\phi^{\ell-1})$ satisfying $\iota_hf=0$, there exists some $u\in\Gamma(\Omega,\mathcal{I}_{\phi}^\ell)$ such that $\iota_hu=f$ where $1\leq \ell\leq m$.
\end{lemma}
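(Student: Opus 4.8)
The plan is to obtain $u$ from the duality Lemma~\ref{riesz}, applied to the Hilbert spaces $H_1,H_2,H_3,H$, the closed subspace $D$, and the operators $R=\iota_h$, $T$, $S$ set up above; recall that $R(\mathrm{Ker}\,T)\subseteq D$, that $S\circ T=0$, and that $R^{\ast}v=e^{\phi_1-\varphi_2}\bar h\wedge v$ by (\ref{aoc}). It then suffices to choose the weights $\phi_1,\phi_2,\phi_3$ (with $\varphi_2=\phi_1+\log|h|^2$ as in (\ref{coo})) so that: (i) they satisfy the hypotheses of Lemma~3.3 of~\cite{JYY}, making available the basic a priori estimate for $T,S$ on $\mathcal{S}_P$-valued forms; (ii) $\int_\Omega|f|^2e^{-\varphi_2}<\infty$, i.e.\ $f\in H$; and (iii) statement~(2) of Lemma~\ref{riesz} holds with a constant $C$ bounded by a fixed multiple of $\|f\|_H$. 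Granting a resulting $u\in\mathrm{Ker}\,T$ with $\iota_h u=f$ and $\int_\Omega|u|^2e^{-\phi_1}<\infty$, we conclude: $\mathrm{Ker}\,T\subseteq\bigwedge^{\ell}\Gamma(\Omega,\mathcal{S}_P)^{\oplus m}$, and since $h_1,\dots,h_m$ have no common zeros the function $\log|h|^2$ is locally bounded (and so will be the remaining part of $\phi_1-\phi$), so $|u|^2e^{-\phi}$ is locally integrable and $u\in\Gamma(\Omega,\mathcal{I}_\phi^{\ell})$.

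To arrange (ii) and to construct the weights, fix a $P$-convex exhaustion function $\varphi\in C^2(\Omega)$ and choose a convex increasing $\lambda\in C^\infty(\mathbb R)$ growing fast enough that $\int_\Omega|f|^2e^{-\phi-\lambda(\varphi)}<\infty$; this is possible because on each $\{\varphi<c\}\Subset\Omega$ the integrand is dominated by the locally integrable $|f|^2e^{-\phi}$, and $\phi+\lambda(\varphi)$ is again $P$-convex (a convex increasing function of a $P$-convex exhaustion is $P$-convex, by (\ref{quadratic form})). Following Skoda's scheme I take $\phi_1=\phi+\lambda(\varphi)+c_1\log|h|^2$, $\varphi_2=\phi_1+\log|h|^2$, and obtain $\phi_2,\phi_3$ by lowering $\phi_1$ by further multiples of $\log|h|^2$, the coefficients being dictated by the degree $\ell$ and by the Skoda exponent of the system (an integer $\leq\min\{n,m-1\}$); then (ii) holds because $\log|h|^2$ is locally bounded. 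Checking (i) is genuinely more subtle than in the complex setting: by (\ref{quadratic form}) and the Leibniz rule for $\star$ (here $\eta=0$), the function $\log|h|^2$ is \emph{not} $P$-convex in general, and $Q_{\log|h|^2,\mathfrak p}$ is only non-negative modulo a first-order term involving the structure functions $e_{\alpha\beta}^{\gamma},d_{\alpha\beta}^{\gamma}$ of (\ref{integrability condition})--(\ref{condition of the Lie braket}); this defect is of lower order and must be absorbed by letting $\lambda$ grow fast and by the interplay of the three weights, which is precisely the curvature bookkeeping underlying Skoda's proof.

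The core is statement~(2) of Lemma~\ref{riesz}: $|(f,v)_H|\leq C\|R^{\ast}v+T^{\ast}w\|_{H_1}$ for $v\in D$, $w\in\mathrm{Ker}\,S\cap\mathrm{Dom}(T^{\ast})$. Since $\iota_h v=0$ on $D$, we have $|\bar h\wedge v|^2=|h|^2|v|^2$ together with the Cartan relation $\iota_h(\bar h\wedge v)=|h|^2v$, so $R^{\ast}$ is isometric on $D$ with $\iota_h R^{\ast}v=v$, and likewise $\|R^{\ast}f\|_{H_1}=\|f\|_H$; hence $(f,v)_H=(R^{\ast}f,R^{\ast}v)_{H_1}$. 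Decomposing $R^{\ast}v=g'+g''$ with $g'\in\mathrm{Ker}\,T$ and $g''\perp\mathrm{Ker}\,T$, the infimum over $w$ gives $\|R^{\ast}v+T^{\ast}w\|_{H_1}\geq\|g'\|_{H_1}$, while $|(R^{\ast}f,g')_{H_1}|\leq\|f\|_H\|g'\|_{H_1}$ is already of the desired form. The remaining term $(R^{\ast}f,g'')_{H_1}$ is the Skoda term: because $g''\perp\mathrm{Ker}\,T$ it is governed by $TR^{\ast}f$, which carries the factor $e^{\phi_1-\varphi_2}=|h|^{-2}$ and, by the Leibniz rule, only involves the $\bar h$-orthogonal component of the failure $(p_\alpha\bar h_j)$ of $\bar h$ to be a solution, contracted against $f$; this is exactly the quantity dominated by the Fubini--Study-type part of $Q_{\log|h|^2}$. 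Inserting this into the a priori estimate of Lemma~3.3 of~\cite{JYY} for $g''$, the multiples of $\log|h|^2$ built into $\phi_1,\phi_2,\phi_3$ supply the matching curvature and bound $(R^{\ast}f,g'')_{H_1}$ by a fixed multiple of $\|f\|_H\|g'\|_{H_1}$, \emph{provided} the coefficients are chosen correctly.

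Pinning down these coefficients, together with verifying (i), is the main obstacle --- the analogue of the delicate exponent bookkeeping in Skoda's division theorem, here complicated by the first-order errors coming from the non-involutive directions $\bar p_\alpha$. Once (2) holds with the right $C$, Lemma~\ref{riesz} produces $u\in\mathrm{Ker}\,T$ with $\iota_h u=f$ and $\|u\|_{H_1}\leq C$, and the first paragraph converts this into $u\in\Gamma(\Omega,\mathcal{I}_\phi^{\ell})$, finishing the case $\eta=0$. As a consistency check one may note that, since $h$ has no common zeros, the complex (\ref{qkc}) --- and even the complex of sheaves $\mathcal{I}_\phi^{\bullet}$ with differential $\iota_h^{\star}$, via the Koszul contracting homotopy built from $1/h_{j_0}$ near a point where $h_{j_0}\neq0$ --- is exact, so the conclusion is also derivable from the vanishing of the relevant higher cohomology of the $\mathcal{I}_\phi^{\bullet}$ on the $P$-convex domain $\Omega$, a vanishing that itself rests on the weighted $L^2$ estimates of~\cite{JYY}.
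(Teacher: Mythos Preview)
Your plan is the right framework---Lemma~\ref{riesz} applied to the Hilbert spaces and operators set up before the lemma---but the proposal stops short of the actual estimate and contains a misconception that blocks you from finishing it.

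First, your claim that $\log|h|^2$ is ``not $P$-convex in general'' and that $Q_{\log|h|^2,\mathfrak p}$ is only nonnegative ``modulo a first-order term'' is wrong. A direct computation (using $P_\alpha h_k=0$ and (\ref{condition of the Lie braket})) gives
\[
Q_{\log|h|^2,\mathfrak p}(\xi,\xi)=|h|^{-2}\sum_{j}\big|P_\alpha\bar h_j\,\bar\xi_\alpha\big|^2-|h|^{-4}\big|h_jP_\alpha\bar h_j\,\bar\xi_\alpha\big|^2\ \geq\ 0,
\]
the $e_{\alpha\beta}^{\gamma}$-contribution cancels exactly against the commutator $[P_\alpha,\bar P_\beta]$; so $\log|h|^2$ is $P$-convex wherever $h\neq0$, hence everywhere in $\Omega$ by the no-common-zeros hypothesis. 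This removes your ``main obstacle'': since $\log|h|^2$ is smooth, $Q_{\log|h|^2,\mathfrak p}$ is continuous on $\Omega$ and can be absorbed on compacta by choosing $\chi'(\varphi)$ large. There is no need for delicate Skoda exponents here; that bookkeeping only becomes necessary in Proposition~\ref{de.}, where $h$ may vanish.

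Second, the paper does not decompose $R^\ast v=g'+g''$ as you propose; it expands $\|R^\ast v+T^\ast w\|_{\phi_1}^2=\mathrm{I}+\mathrm{II}+\mathrm{III}$ directly. Term $\mathrm{I}$ computes exactly to $\|v\|_{\varphi_2}^2$ thanks to $\iota_h v=0$ (you observed this isometry). Term $\mathrm{III}=\|T^\ast w\|^2$ is bounded below by the curvature via Proposition~3.1 of \cite{JYY}. The cross term $\mathrm{II}=2\mathrm{Re}(R^\ast v,T^\ast w)$ is handled by integrating by parts to $T(|h|^{-2}\bar h\wedge v)$ and then invoking the algebraic Skoda inequality, Lemma~\ref{la.}, which bounds the resulting expression by $\lambda\, Q_{\log|h|^2,\mathfrak p}(w_J,w_J)$. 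You never identify Lemma~\ref{la.} as the missing algebraic ingredient; without it your ``Skoda term'' $(R^\ast f,g'')$ remains uncontrolled. Once $\mathrm{II}$ is estimated this way, one sets $\phi_1=\chi(\varphi)+\phi-\log|h|^2$, $\varphi_2=\chi(\varphi)+\phi$, and chooses $\chi$ so that $\chi'(\varphi)Q_{\varphi}$ dominates both $\kappa'$ and the (smooth, bounded on compacta) $Q_{\log|h|^2}$-contribution; then Lemma~\ref{riesz} applies and the resulting $u$ lies in $\Gamma(\Omega,\mathcal I_\phi^\ell)$ exactly as in your first paragraph.
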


Before giving the proof of theorem, let's recall the following generalization(see \cite{Jqc12}) of Skoda's inequality(see \cite{Sh72}):
\begin{lemma}\label{la.}
	Given $a_i,b_{i\alpha},c_{J,\alpha}\in \mathbb{C}\ (|J|=\ell)$. Then for any strictly increasing multi-index $I$ with length $\ell-1$ we have
	\begin{align*}
		\bigg|\sum_{i\notin I}\bar{a}_j\left(a_jb_{i\alpha}-a_ib_{j\alpha}\right)c_{iI,\alpha}\bigg|^2\leq \lambda\sum_{i=1}^m|a_i|^2\sum_{i\notin I}\sum_{j<k}\bigg|\left(a_jb_{k\alpha}-a_kb_{j\alpha}\right)c_{iI,\alpha}\bigg|^2,
	\end{align*}
	where the constant $\lambda=\left\{\begin{array}{rcl}{\rm min}\{m-1,r\},  &\text{if} \ \ell=1, \\ {\rm min}\{m-\ell+1,r\}, &\text{if} \ \ell\geq2.\end{array}\right.$
\end{lemma}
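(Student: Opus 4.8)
This lemma is a purely algebraic pointwise inequality, and I would prove it by reducing it, via two exact identities, to a weighted Cauchy--Schwarz estimate whose constant is fixed by a dimension count. One may assume $a=(a_i)_{1\le i\le m}\ne 0$, since otherwise both sides vanish; set $B:=\sum_{i=1}^m|a_i|^2>0$ and, for each $i\notin I$, introduce the vector
\begin{equation*}
	w^{(i)}:=\sum_{\alpha=1}^r c_{iI,\alpha}\,b^{\alpha}\in\mathbb{C}^m,\qquad b^{\alpha}:=(b_{j\alpha})_{1\le j\le m},
\end{equation*}
and write $\langle a,w\rangle:=\sum_{j=1}^m\bar a_j w_j$. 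First I would expand the left-hand sum, collect the coefficient of each $c_{iI,\alpha}$, and recognize the orthogonal projection $v^{(i)}:=w^{(i)}-B^{-1}\langle a,w^{(i)}\rangle\,a$ of $w^{(i)}$ onto $a^{\perp}\subseteq\mathbb{C}^m$:
\begin{equation*}
	\sum_{i\notin I}\bar a_j\bigl(a_jb_{i\alpha}-a_ib_{j\alpha}\bigr)c_{iI,\alpha}=\sum_{i\notin I}\bigl(B\,w^{(i)}_i-a_i\langle a,w^{(i)}\rangle\bigr)=B\sum_{i\notin I}v^{(i)}_i .
\end{equation*}
Simultaneously, since $\sum_\alpha(a_jb_{k\alpha}-a_kb_{j\alpha})c_{iI,\alpha}=a_jw^{(i)}_k-a_kw^{(i)}_j$, Lagrange's identity together with the Pythagorean relation $|w^{(i)}|^2=|v^{(i)}|^2+B^{-1}|\langle a,w^{(i)}\rangle|^2$ gives $\sum_{j<k}\bigl|\sum_\alpha(a_jb_{k\alpha}-a_kb_{j\alpha})c_{iI,\alpha}\bigr|^2=B\,|v^{(i)}|^2$. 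Substituting both of these into the asserted inequality and cancelling the common factor $B^2$, it remains to prove
\begin{equation*}
	\Bigl|\sum_{i\notin I}v^{(i)}_i\Bigr|^2\le\lambda\sum_{i\notin I}|v^{(i)}|^2 .
\end{equation*}

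For this last step I would put $W:=\operatorname{span}\{v^{(i)}:i\notin I\}$ and $d:=\dim W$. Choosing an orthonormal basis $e^1,\dots,e^d$ of $W$ and writing $v^{(i)}=\sum_{s=1}^d c_{is}e^s$, so that $|v^{(i)}|^2=\sum_s|c_{is}|^2$, one has $\sum_{i\notin I}v^{(i)}_i=\sum_{s=1}^d\bigl(\sum_{i\notin I}c_{is}\,e^s_i\bigr)$; two successive applications of Cauchy--Schwarz --- first over the $d$ indices $s$, then, for each $s$, over the indices $i\notin I$ using $|e^s|=1$ --- yield $\bigl|\sum_{i\notin I}v^{(i)}_i\bigr|^2\le d\sum_{i\notin I}|v^{(i)}|^2$. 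It then suffices to verify $d\le\lambda$, for which three elementary bounds are at hand: $d\le\#\{i:i\notin I\}=m-\ell+1$, since $W$ is spanned by that many vectors; $d\le m-1$, since every $v^{(i)}$ lies in $a^{\perp}$; and $d\le r$, since each $v^{(i)}$ lies in the image, under orthogonal projection onto $a^{\perp}$, of $\operatorname{span}\{b^1,\dots,b^r\}$. Intersecting these bounds gives $d\le\min\{m-1,r\}$ when $\ell=1$ (the estimate $m-\ell+1=m$ being inactive there) and $d\le\min\{m-\ell+1,r\}$ when $\ell\ge2$; in either case $d\le\lambda$, which finishes the proof.

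The computations in the first paragraph are a routine, if mildly laborious, rearrangement, and the Cauchy--Schwarz argument is standard; the only point requiring care is the extraction of the sharp constant --- namely, seeing that the three competing dimension estimates for $W$ combine exactly into the stated $\lambda$, and in particular that the constraint $v^{(i)}\in a^{\perp}$ is what sharpens $m$ to $m-1$ in the case $\ell=1$. Once the substitution $v^{(i)}=w^{(i)}-B^{-1}\langle a,w^{(i)}\rangle\,a$ is made, I do not anticipate any real obstacle: the lemma becomes transparent.
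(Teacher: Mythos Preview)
Your argument is correct. The two algebraic identities you isolate --- that the left-hand sum equals $B\sum_{i\notin I}v^{(i)}_i$ with $v^{(i)}$ the projection of $w^{(i)}$ onto $a^\perp$, and that the inner right-hand sum equals $B|v^{(i)}|^2$ via Lagrange's identity --- are both valid, and the reduction to $|\sum_{i\notin I}v^{(i)}_i|^2\le d\sum_{i\notin I}|v^{(i)}|^2$ followed by the three dimension bounds on $d=\dim W$ goes through cleanly. The case distinction between $\ell=1$ and $\ell\ge 2$ is handled correctly: for $\ell\ge 2$ the bound $m-\ell+1\le m-1$ makes the latter redundant, while for $\ell=1$ it is the constraint $v^{(i)}\in a^\perp$ that supplies the sharp $m-1$.

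As for the comparison: the paper does not prove this lemma at all; it is only \emph{recalled} from \cite{Jqc12} as a known generalization of Skoda's inequality. So there is no proof in the paper to compare against. Your self-contained derivation via orthogonal projection onto $a^\perp$ and a dimension count is in fact the standard route to Skoda-type constants, and is essentially how such inequalities are established in the cited literature.
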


\begin{proof}[\bf Proof\ of\ Lemma\ \ref{division thm.0}]
	Since $\phi$ is $P$-convex function, by the standard arguments of smooth approximation for $\phi$ and taking weak limit, %as in the proof of Lemma \ref{ewe.}
	we can assume without loss of generality that $\phi\in C^2\left(\Omega\right)$. 
	
	Let $v\in D,\ w\in{\rm Ker}S\cap{\rm Dom}\left(T^*\right)$, then	\begin{align}\label{de1}
		\|R^*v+T^*w\|_{\phi_1}^2&=\|R^*v\|_{\phi_1}^2+2{\rm Re}\left(R^*v,T^*w\right)_{\phi_1}+\|T^*w\|_{\phi_1}^2\nonumber\\
		&=:{\rm I}+{\rm II}+{\rm III}.
	\end{align}
	We will handle these terms separately. Firstly, by (\ref{coo}) and (\ref{aoc}) we know that for the first term I,
	\begin{align}\label{de2}
		{\rm I}=&\int_{\Omega}|\bar{h}\wedge v|^2e^{\phi_1-2\varphi_2}\nonumber\\
		=&\sum_{|J|=\ell}\sum_{i,j\in J}\left(-1\right)^{\left(i,J\setminus i\right)+\left(j,J\setminus j\right)}\int_\Omega\left<\bar{h}_iv_{J\setminus i},\bar{h}_jv_{J\setminus j}\right>e^{\phi_1-2\varphi_2}\nonumber\\
		=&\sum_{|L|=\ell-1}\sum_{i\notin L}\int_\Omega|h_i|^2|v_{L}|^2e^{\phi_1-2\varphi_2}-\sum_{|K|=\ell-2}\sum_{i\neq j\notin K}\left<\bar{h}_iv_{jK},\bar{h}_jv_{iK}\right>e^{\phi_1-2\varphi_2}\nonumber\\
		=&\sum_{|L|=\ell-1}\sum_{i=1}^m\int_\Omega|h_i|^2|v_{L}|^2e^{\phi_1-2\varphi_2}-\sum_{|K|=\ell-2}\sum_{i,j\notin K}\int_\Omega\left<\bar{h}_i\bar{v}_{iK},\bar{h}_j\bar{v}_{jK}\right>e^{\phi_1-2\varphi_2}\nonumber\\
		=&\sum_{|L|=\ell-1}\sum_{i=1}^m\int_\Omega|h_i|^2|v_{L}|^2e^{\phi_1-2\varphi_2}\nonumber\\
		=&\|v\|_{\varphi_2}^2.
	\end{align}
	In the third line, we changed the indices of summation by letting $ 
	L=J\setminus i\ \text{and} \ K=J\setminus{\lbrace i,j\rbrace} $. The last but one line is valid since $v\in D$.
	
	Let $w=\left(w_{J,\alpha}\right)_{|J|=\ell, 1\leq \alpha\leq r}\in{\rm Ker}S\cap{\rm Dom}\left(T^*\right)\cap\mathcal{D}(\Omega)^{\oplus\tbinom{r}{1}}$, then we can use (\ref{coo}) to rewrite the second term II as follows.
	\begin{align*}
		{\rm II}&=2{\rm Re}\int_\Omega\left<|h|^{-2}\bar{h}\wedge v,T^*w\right>e^{-\phi_1}\nonumber\\
		&=2{\rm Re}\int_\Omega\left<T\left(|h|^{-2}\bar{h}\wedge v\right),w\right>e^{-\phi_2}\nonumber\\
		&=2{\rm Re}\sum_{|J|=\ell}\sum_{j\in J}\left(-1\right)^{\left(j,J\setminus j\right)}\int_\Omega\left<P_\alpha\left(|h|^{-2}\bar{h}_j\right)v_{J\setminus j},w_{J,\alpha}\right>e^{-\phi_2}\nonumber\\
		&=2{\rm Re}\sum_{|I|=\ell-1}\sum_{j\notin I}\int_\Omega\left<P_\alpha\left(|h|^{-2}\bar{h}_j\right)v_I,w_{jI,\alpha}\right>e^{-\phi_2}.
	\end{align*}
	By using the Cauchy-Schwarz inequality, we have for any constant $C>1$
	\begin{align}\label{de3}
		{\rm II}\geq&-\frac{1}{C}\|v\|_{\varphi_2}^2-C\sum_{|I|=\ell-1}\int_\Omega|h|^2\bigg|\sum_{j\notin I}P_\alpha\left(|h|^{-2}\bar{h}_j\right)\bar{w}_{jI,\alpha}\bigg|^2e^{-\phi_3}\nonumber\\
		=&-\frac{1}{C}\|v\|_{\varphi_2}^2-C\sum_{|I|=\ell-1}\int_\Omega|h|^{-6}\bigg|\sum_{i\notin I}h_j\left(\bar{h}_jP_\alpha \bar{h}_i-\bar{h}_iP_\alpha \bar{h}_j\right)\bar{w}_{iI,\alpha}\bigg|^2e^{-\phi_3}.
	\end{align}
	Applying Lemma \ref{la.} by letting
	$$a_i=\bar{h}_i,\ b_{i\alpha}=P_\alpha\bar{h}_i,\ c_{iI,\alpha}=\bar{w}_{iI,\alpha},$$
	then
	\begin{align}\label{de4}
		&|h|^{-6}\bigg|\sum_{i\notin I}h_j\left(\bar{h}_jP_\alpha\bar{h}_i-\bar{h}_iP_\alpha\bar{h}_j\right)\bar{w}_{iI,\alpha}\bigg|^2\nonumber\\
		\leq&\lambda|h|^{-4}\sum_{i\notin I}\sum_{j<k}\bigg|\left(\bar{h}_jP_\alpha\bar{h}_{k}-\bar{h}_kP_\alpha\bar{h}_{j}\right)\bar{w}_{iI,\alpha}\bigg|^2\nonumber\\
		=&\lambda|h|^{-2}\sum_{i\notin I}\sum_{j=1}^m\bigg|P_\alpha\bar{h}_{j}\bar{w}_{iI,\alpha}\bigg|^2-\lambda|h|^{-4}\sum_{i\notin I}\bigg|h_jP_\alpha\bar{h}_j\bar{w}_{iI,\alpha}\bigg|^2\nonumber\\
		=&\lambda\sum_{i\notin I}Q_{\log|h|^2,\mathfrak{p}}\left(w_{iI},w_{iI}\right),
	\end{align}
	where the penultimate line follows from Lagrange identity, and the last line holds since:
	\begin{align*}
		&P_\alpha\bar{P}_\beta\log|h|^2\bar{w}_{iI,\alpha}w_{iI,\beta}\\
		=&P_\alpha\left(|h|^{-2}\bar{h}_k\bar{P}_\beta h_k\right)\bar{w}_{iI,\alpha}w_{iI,\beta}\\
		=&\left(-|h|^{-4}h_jP_\alpha\bar{h}_j\bar{h}_k\bar{P}_\beta h_k+|h|^{-2}P_\alpha\bar{h}_k\bar{P}_\beta h_k+|h|^{-2}\bar{h}_kP_\alpha\bar{P}_\beta h_k\right)\bar{w}_{iI,\alpha}w_{iI,\beta}\\
		=&-|h|^{-4}|h_jP_\alpha\bar{h}_j\bar{w}_{iI,\alpha}|^2+|h|^{-2}\sum_{j=1}^{m}|P_\alpha\bar{h}_j\bar{w}_{iI,\alpha}|^2+|h|^{-2}\bar{h}_k\left[P_\alpha,\bar{P}_\beta\right]h_k\bar{w}_{iI,\alpha}w_{iI,\beta}\\
		=&-|h|^{-4}|h_jP_\alpha\bar{h}_j\bar{w}_{iI,\alpha}|^2+|h|^{-2}\sum_{j=1}^{m}|P_\alpha\bar{h}_j\bar{w}_{iI,\alpha}|^2-e_{\alpha\beta}^\gamma\bar{P}_\gamma\log|h|^2\bar{w}_{iI,\alpha}w_{iI,\beta}.
	\end{align*}
	Thus, from (\ref{de3}), (\ref{de4}) and Lemma 3.3 in \cite{JYY}, it follows that for $w\in{\rm Ker}S\cap{\rm Dom}\left(T^*\right)$
	\begin{align}\label{de5}
		{\rm II}\geq-\frac{1}{C}\|v\|_{\varphi_2}^2-C\lambda\sum_{|I|=\ell-1}\sum_{i\notin I}\int_\Omega Q_{\log|h|^2,\mathfrak{p}}\left(w_{iI},w_{iI}\right)e^{-\phi_3}.
	\end{align}
	If $w\in{\rm Ker}S\cap{\rm Dom}\left(T^*\right)$, in view of Proposition 3.1 in \cite{JYY}, there exists a smooth function $\kappa'>0$ such that for any constant $C'>1$
	\begin{align}\label{de6}
		{\rm III}\geq\sum_{|J|=\ell}\int_\Omega\left(\frac{1}{C'}Q_{\phi_1,\mathfrak{p}}\left(w_J,w_J\right)-\kappa'|w_J|^2\right)e^{-\phi_3}.%注意到这里把phi_3换成了phi_1, 但是phi_3=phi_1+2tau, 我们将tau的二次型放入kappa中即可.
	\end{align}
	Combining (\ref{de1}), (\ref{de2}), (\ref{de5}) and (\ref{de6}) gives
	\begin{align*}
		\|R^*v+T^*w\|_{\phi_1}^2\geq\frac{C-1}{C}\|v\|_{\varphi_2}^2+\sum_{|J|=\ell}\int_\Omega\left(Q_{\frac{\phi_1}{C'}-C\lambda \ell\log|h|^2,\mathfrak{p}}\left(w_{J},w_{J}\right)-\kappa'|w_J|^2\right)e^{-\phi_3}.
	\end{align*}
	
	Since $\Omega$ is $P$-convex, one can find a positive exhaustion function $\varphi$ and positive continuous function $\theta$ in $\Omega$ such that $Q_{\varphi,\mathfrak{p}}\left(w_J,w_J\right)>\theta|w_J|^2$ holds at any point $\mathfrak{p}\in\Omega$. On the other hand, $f\in\Gamma(\Omega,\mathcal{I}_{\phi}^\ell)$ implies that 
	$$fe^{-\frac{1}{2}\phi}\in L_{loc}^2(\Omega).$$
	Then we can choose convex increasing function $\chi\in C^\infty\left(\mathbb{R}\right)$  such that $\chi\left(\varphi\right)>0$ and%用demaiily中的引理5.7可知\chi\varphi和\chi'\varphi都大于一个正函数！
	\begin{align}\label{de7}
		\left\{
		\begin{aligned}
			&\frac{1}{C'}\chi'\left(\varphi\right)\theta|w_J|^2-Q_{\left(C\lambda \ell+\frac{1}{C'}\right)\log|h|^2,\mathfrak{p}}\left(w_{J},w_{J}\right)\geq\kappa'|w_J|^2,\\
			&fe^{-\frac{1}{2}\phi}\in L_{\chi\left(\varphi\right)}^2(\Omega).
		\end{aligned}
		\right.
	\end{align}
	Let
	\begin{align}\label{de8}
		\phi_1:=\chi\left(\varphi\right)+\phi-\log|h|^2,\qquad\varphi_2:=\chi\left(\varphi\right)+\phi.
	\end{align}
	By (\ref{de7}), we have
	\begin{align}\label{de9}
		Q_{\frac{\phi_1}{C'}-C\lambda \ell\log|h|^2,\mathfrak{p}}\left(w_{J},w_{J}\right)\geq\kappa'|w_J|^2.
	\end{align}
	Hence, for any $v\in D, w\in{\rm Ker}S\cap{\rm Dom}\left(T^*\right)$  we obtain
	\begin{align*}
		|\left(f,v\right)_H|^2\leq\|f\|_{\varphi_2}^2\|v\|_{\varphi_2}^2\leq\frac{C}{C-1}\|f\|_{\varphi_2}^2\|R^*v+T^*w\|_{\phi_1}^2.
	\end{align*}
	According to Lemma \ref{riesz}, there exists at least one $u\in\bigwedge^\ell\Gamma(\Omega,\mathcal{S}_P)^{\oplus m}$ such that $\iota_hu=f$ and
	\begin{align}\label{de10}
		\int_\Omega|u|^2e^{-\phi_1}\leq\frac{C}{C-1}\int_\Omega|f|^2e^{-\varphi_2}<+\infty,
	\end{align}
	which gives
	\begin{align*}
		\int_\omega|u|^2e^{-\phi}\lesssim\int_\Omega|u|^2e^{-\phi_1}<+\infty,
	\end{align*}
	where $\omega\Subset\Omega$. Then it follows that $u\in\Gamma(\Omega,\mathcal{I}^\ell_{\phi})$.
\end{proof}

\begin{remark}
	One can use (\ref{Pp}) to remove the assumption $\eta = 0$ in Lemma \ref{division thm.0} to get the following: Assume that $P$ is an elliptic operator, $\Omega$ is a $P$-convex domain and $\phi$ is a $P$-convex function in $\Omega$.
	If the functions $h_1,\cdots,h_m\in\Gamma(\Omega,\mathcal{S}_P)$ have no common zeros, then for any $f\in\Gamma(\Omega,\mathcal{I}_\phi^{\ell-1})$ satisfying $\iota_h^\star f=0$, there exists some $u\in\Gamma(\Omega,\mathcal{I}_{\phi}^\ell)$ such that $\iota_h^\star u=f$ where $1\leq \ell\leq m$.
\end{remark}

We now remove from Lemma \ref{division thm.0} the assumption that $h_1,\cdots,h_m\in\Gamma(\Omega,\mathcal{S}_P)$ have no common zeros.

\begin{prop}\label{de.}
	Assume that $P$ is an elliptic operator, $\Omega$ is a $P$-convex domain and $\phi$ is a $P$-convex function in $\Omega$. Let $\rho>1$ be a constant and $\lambda$ be the constant in Lemma \ref{la.}. 
	If $\eta=0$, then for any $f\in\Gamma\left(\Omega,\mathcal{I}_{\phi+\left(\lambda\ell\rho+1\right)\log|h|^2}^{\ell-1}\right)$ with $\iota_hf=0$, there exists some  $u\in\Gamma\left(\Omega,\mathcal{I}_{\phi+\lambda\ell\rho\log|h|^2}^\ell\right)$ such that $\iota_hu=f$ where  $h_1,\cdots,h_m\in\Gamma(\Omega,\mathcal{S}_P)$ and $1\leq \ell\leq m$.
\end{prop}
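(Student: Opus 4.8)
The plan is to reproduce the proof of Lemma~\ref{division thm.0}, with the $P$-convex function $\phi$ there replaced by
\[
\psi:=\phi+(\lambda\ell\rho+1)\log|h|^{2}.
\]
First I would check that $\psi$ is again $P$-convex, for which it suffices to know that $\log|h|^{2}$ is: with $N:=|h|^{2}+\varepsilon$ ($\varepsilon>0$), the computation already carried out around~(\ref{de4}) gives $Q_{\log N,\mathfrak p}(\xi,\xi)=N^{-1}\sum_{j}|P_{\alpha}\bar h_{j}\bar\xi_{\alpha}|^{2}-N^{-2}|\sum_{j}h_{j}P_{\alpha}\bar h_{j}\bar\xi_{\alpha}|^{2}\ge 0$ by Cauchy--Schwarz, and letting $\varepsilon=1/\iota\downarrow 0$ produces the decreasing sequence of smooth functions with non-negative quadratic form demanded by the definition of a $P$-convex function. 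Thus $\psi$ is $P$-convex and locally bounded from above, and one applies Lemma~\ref{riesz} to the same Hilbert spaces and operators as in Lemma~\ref{division thm.0}, now with
\[
\phi_{1}=\chi(\varphi)+\phi+\lambda\ell\rho\log|h|^{2},\qquad \varphi_{2}=\phi_{1}+\log|h|^{2},\qquad R^{*}v=|h|^{-2}\,\bar h\wedge v .
\]

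I would then treat the three terms $\mathrm I=\|v\|_{\varphi_{2}}^{2}$, $\mathrm{II}$, $\mathrm{III}$ exactly as in the proof of Lemma~\ref{division thm.0}: term $\mathrm{II}$ by Skoda's inequality Lemma~\ref{la.} with $a_{i}=\bar h_{i},\ b_{i\alpha}=P_{\alpha}\bar h_{i},\ c_{iI,\alpha}=\bar w_{iI,\alpha}$ --- and, crucially, with the \emph{same} constant $\lambda$, since the number $m$ of the functions $h_{i}$ is unchanged --- and term $\mathrm{III}$ by Proposition~3.1 of~\cite{JYY}, arriving at
\[
\|R^{*}v+T^{*}w\|_{\phi_{1}}^{2}\ \ge\ \tfrac{C-1}{C}\,\|v\|_{\varphi_{2}}^{2}+\sum_{|J|=\ell}\int_{\Omega}\!\Bigl(Q_{\frac{\phi_{1}}{C'}-C\lambda\ell\log|h|^{2},\,\mathfrak p}(w_{J},w_{J})-\kappa'|w_{J}|^{2}\Bigr)e^{-\phi_{3}} .
\]
The only genuinely new point is the choice of constants: take $C'\in(1,\rho)$ and $C:=\rho/C'\in(1,\rho)$ (for instance $C=C'=\sqrt{\rho}$). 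Then the coefficient $\tfrac{\lambda\ell\rho}{C'}-C\lambda\ell$ of $Q_{\log|h|^{2}}$ is exactly zero, so the bracketed quadratic form reduces to $\tfrac1{C'}Q_{\chi(\varphi)+\phi}\ge\tfrac1{C'}\chi'(\varphi)Q_{\varphi}\ge\tfrac1{C'}\chi'(\varphi)\theta|w_{J}|^{2}$. This cancellation is precisely what lets the argument survive the presence of common zeros: the common zero set $Z=\{h=0\}$ can enter the estimate only through $Q_{\log|h|^{2}}$, which is unbounded near $Z$, but after the cancellation the controlling term is the smooth, positive form $\tfrac1{C'}Q_{\chi(\varphi)+\phi}$. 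Choosing, as in Lemma~\ref{division thm.0}, the convex increasing $\chi$ with $\chi'(\varphi)\ge C'\kappa'/\theta$ and growing fast enough that $|f|^{2}e^{-\phi}|h|^{-2(\lambda\ell\rho+1)}\in L^{1}_{\chi(\varphi)}(\Omega)$ --- possible because $f\in\Gamma(\Omega,\mathcal I^{\ell-1}_{\phi+(\lambda\ell\rho+1)\log|h|^{2}})$ makes this density locally integrable --- yields $\|R^{*}v+T^{*}w\|_{\phi_{1}}^{2}\ge\tfrac{C-1}{C}\|v\|_{\varphi_{2}}^{2}$, hence $|(f,v)_{H}|\le\sqrt{\tfrac{C}{C-1}}\,\|f\|_{\varphi_{2}}\|R^{*}v+T^{*}w\|_{\phi_{1}}$. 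Lemma~\ref{riesz} then produces $u\in\bigwedge^{\ell}\Gamma(\Omega,\mathcal S_{P})^{\oplus m}$ with $\iota_{h}u=f$ and $\int_{\Omega}|u|^{2}e^{-\phi_{1}}\le\tfrac{C}{C-1}\int_{\Omega}|f|^{2}e^{-\varphi_{2}}<\infty$; restricting to $\omega\Subset\Omega$, where $e^{-\chi(\varphi)}$ is bounded below, this becomes $\int_{\omega}|u|^{2}e^{-\phi}|h|^{-2\lambda\ell\rho}<\infty$, i.e.\ $u\in\Gamma(\Omega,\mathcal I^{\ell}_{\phi+\lambda\ell\rho\log|h|^{2}})$.

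The step I expect to be the main obstacle is the legitimacy of the above estimates for the weights $\phi_{1},\varphi_{2}$, which are locally bounded above but equal $-\infty$ along the measure-zero set $Z$: one must check that the Bochner--Kodaira estimate of Proposition~3.1 in~\cite{JYY} and the integrations by parts producing term $\mathrm{II}$ remain valid there. This should be handled by the usual approximation of $P$-convex functions by smooth ones (as in the definition), running the argument on the regularised weights $\phi_{\iota}+(\lambda\ell\rho+1)\log(|h|^{2}+1/\iota)$ and passing to a weak $L^{2}_{\mathrm{loc}}$-limit of the solutions $u_{\iota}$ --- such a limit is again a section of $\mathcal S_{P}$ componentwise by elliptic regularity, still solves $\iota_{h}u=f$ by continuity of $\iota_{h}$, and inherits the integrability bound by monotone convergence and weak lower semicontinuity. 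The delicate point in carrying this out is to keep the term $\mathrm{II}$ and term $\mathrm{III}$ contributions matched through the limit so that the cancellation $\tfrac{\lambda\ell\rho}{C'}-C\lambda\ell=0$ persists and the estimates stay uniform in $\iota$.
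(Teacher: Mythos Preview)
Your core insight---choosing $\phi_{1}=\chi(\varphi)+\phi+\lambda\ell\rho\log|h|^{2}$ and $C,C'>1$ with $CC'=\rho$ so that the coefficient of $Q_{\log|h|^{2}}$ in the a~priori estimate vanishes---is exactly the idea in the paper's proof. So at the level of the main estimate you and the paper agree.

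Where you diverge is in how the singularity of the weight along $Z=\{h=0\}$ is handled. The paper does \emph{not} regularise. Instead it observes that for each $i$ the set $\Omega\setminus S_{i}$ (with $S_{i}=\{h_{i}=0\}$) is again $P$-convex, the exhaustion being $\varphi'+|h_{i}|^{-2}$; on this subdomain $|h|\ge|h_{i}|>0$, so $\phi_{1}$ is smooth and the Bochner--Kodaira estimate of \cite{JYY} applies verbatim. One then gets $u\in\bigwedge^{\ell}\Gamma(\Omega\setminus S_{i},\mathcal S_{P})^{\oplus m}$ with the $L^{2}$ bound \eqref{dte1}, and extends $u$ across the measure-zero set $S_{i}$ using the complex Frobenius coordinates \eqref{B} together with the $L^{2}$ removable-singularity/extension theorem for holomorphic functions. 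This completely sidesteps the ``delicate point'' you identify.

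Your regularisation route is plausible but, as you suspect, not routine: if you replace $\log|h|^{2}$ by $\log(|h|^{2}+\varepsilon)$ in $\phi_{1}$ while keeping $\varphi_{2}-\phi_{1}=\log|h|^{2}$ (so that term~II still produces exactly $Q_{\log|h|^{2}}$), the cancellation leaves behind $C\lambda\ell\,\log\bigl(1+\varepsilon|h|^{-2}\bigr)$, whose quadratic form $Q_{\log N}-Q_{\log|h|^{2}}$ need not be non-negative; if instead you set $\varphi_{2}-\phi_{1}=\log(|h|^{2}+\varepsilon)$, then $R^{*}v=(|h|^{2}+\varepsilon)^{-1}\bar h\wedge v$ and the Skoda step acquires an extra term $\varepsilon(|h|^{2}+\varepsilon)^{-2}\sum_{j}P_{\alpha}\bar h_{j}\,\bar w_{jI,\alpha}$ that has to be absorbed uniformly in $\varepsilon$. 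Either way the bookkeeping is nontrivial. The paper's restrict-and-extend argument is cleaner precisely because it never has to match regularised term~II against regularised term~III.
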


\begin{proof}
	Let $S_i:=\{h_i=0\}\ (1\leq i\leq m)$, then $\Omega\setminus S_i$ is $P$-convex. In fact,
	\begin{align*}
		P_\alpha\bar{P}_\beta|h_i|^{-2}\bar{\xi}_\alpha\xi_\beta&=P_\alpha\left(-|h_i|^{-4}\bar{h}_i\bar{P}_\beta h_i\right)\bar{\xi}_\alpha\xi_\beta\\
		&=|h_i|^{-4}|P_\alpha\bar{h}_i\bar{\xi}_\alpha|^2-|h_i|^{-4}\bar{h}_i\left[P_\alpha,\bar{P}_\beta\right]h_i\bar{\xi}_\alpha\xi_\beta\\
		&=|h_i|^{-4}|P_\alpha\bar{h}_i\bar{\xi}_\alpha|^2+|h_i|^{-4}\bar{h}_ie_{\alpha\beta}^\gamma\bar{P}_\gamma h_i\bar{\xi}_\alpha\xi_\beta\\
		&=|h_i|^{-4}|P_\alpha\bar{h}_i\bar{\xi}_\alpha|^2-e_{\alpha\beta}^\gamma\bar{P}_\gamma|h_i|^{-2}\bar{\xi}_\alpha\xi_\beta.
	\end{align*}
	Then
	\begin{align*}
		Q_{|h_i|^{-2},\mathfrak{p}}\left(\xi,\xi\right)=|h_i|^{-4}|P_\alpha\bar{h}_i\bar{\xi}_\alpha|^2\geq0.
	\end{align*}
	On the other hand, since $\Omega$ is $P$-convex, there exists exhasution function $\varphi'\in C^2\left(\Omega\right)$ such that its quadratic form is positive at every point in $\Omega$. Thus,
	\begin{align*}
		\varphi:=\varphi'+|h_i|^{-2}
	\end{align*}
	is the exhasution $C^2$-function with positive quadratic form in $\Omega\setminus S_i$. 
	
	Applying Lemma \ref{division thm.0} to $\Omega\setminus S_i$, there exists $u\in\bigwedge^\ell\Gamma\left(\Omega\setminus S_i,\mathcal{S}_P\right)^{\oplus m}$ such that $\iota_hu=f$. However, if we choose the weight functions as in (\ref{de8}), the estimate (\ref{de10}) may be failed. The reason why we cannot choose such $\chi$ since (\ref{de10}) essentially depends on the first estimate in (\ref{de7}) which is false now. 
	%事实上,这里取到的强P凸穷竭函数一定会跟h有关，而需要控制的函数也是跟log|h|^2有关，一般而言取凸增函数压过任意的函数需要本身与那个函数没有关系，但是这里不满足，所以取不到这里的chi！！！
	But we can choose convex increasing function $\chi\in C^\infty\left(\mathbb{R}\right)$ such that $\chi\left(\varphi\right)>0$ and 
	\begin{align*}
		\frac{1}{C'}\chi'\left(\varphi\right)\theta\geq\kappa',\quad fe^{-\frac{1}{2}\phi-\left(\lambda\ell\rho+1\right)\log|h|}\in L_{\chi\left(\varphi\right)}^2(\Omega\setminus S_i).
	\end{align*}
	As we have explained in the proof of Lemma \ref{division thm.0}, we can assume that $\phi\in C^2\left(\Omega\right)$. Let
	\begin{align*}
		\phi_1:=\chi\left(\varphi\right)+\phi+CC'\lambda \ell\log|h|^2,\quad\varphi_2:=\chi\left(\varphi\right)+\phi+\left(CC'\lambda \ell+1\right)\log|h|^2,
	\end{align*}
	where $CC'=\rho$. Then for such $\phi_1$ the estimate (\ref{de9}) is fulfilled. Hence
	\begin{align}\label{dte1}
		\int_{\Omega\setminus S_i}|u|^2e^{-\phi_1}\lesssim\int_{\Omega\setminus S_i}|f|^2|h|^{-2\left(\lambda \ell\rho+1\right)}e^{-\phi-\chi\left(\varphi\right)}<+\infty.
	\end{align}
	
	Since $S_i$ is a zero measure set, then by (\ref{dte1}) we know that $u$ is local $L^2$-integrable. Since $P$ is elliptic, as a corollary of the complex Frobenius theorem(\cite{N}), around every $\mathfrak{p}\in \Omega$ there is a coordinate chart
	$$(U;x_1,\cdots, x_{r-s}, y_1, \cdots, y_{r-s}, t_1,\cdots, t_s), \ 2r-s=n$$
	on which
	\begin{equation}\label{B}
		\{P_1,\cdots,P_r\} \ {\rm and} \  \{\partial_{\bar{z}_1},\cdots,\partial_{\bar{z}_{r-s}},\partial_{t_1},\cdots\partial_{t_s}\} \ {\rm span \ the \ same \ subbundle},
	\end{equation} 
	where $z_\mu=x_\mu+\sqrt{-1}y_\mu$ for $1\leq \mu\leq r-s$. According to (\ref{B}) we obtain that $u$ can be treated as a holomorphic function with respect to variables $(z_1,\cdots,z_{r-s})$ locally. By using the $L^2$-extension theorem(see \cite{Sh72}) for holomorphic functions, one can find some $\tilde{u}\in\bigwedge^\ell\Gamma\left(\Omega,\mathcal{S}_P\right)^{\oplus m}$ such that $\tilde{u}|_{\Omega\setminus S_i}=u$, then the continuity of functions gives $\iota_h\tilde{u}=f$. Then it follows from (\ref{dte1}) again that $\tilde{u}\in\Gamma\left(\Omega,\mathcal{I}_{\phi+\lambda\ell\rho\log|h|^2}^\ell\right)$.
\end{proof}

Now we are in a position to state our main result for division problem.

\begin{thm}\label{division thm}
	Assume that $P$ is an elliptic operator, $\Omega$ is a $P$-convex domain, and $\phi$ is a $P$-convex function in $\Omega$. Let $h_1,\cdots,h_m\in\Gamma(\Omega,\mathcal{S}_P)$ and $\rho>1$ be a constant, then for any $f\in\Gamma\left(\Omega,\mathcal{I}_{\phi+\left(\lambda\ell\rho+1\right)\log|h|^2}^{\ell-1}\right)$  with $\iota_h^\star f=0$, there exists some $u\in\Gamma\left(\Omega,\mathcal{I}_{\phi+\lambda\ell\rho\log|h|^2}^\ell\right)$ such that $\iota_h^\star u=f$ where  $\lambda$ is the constant in Lemma \ref{la.} and $1\leq \ell\leq m$.
\end{thm}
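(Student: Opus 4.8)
The plan is to reduce Theorem \ref{division thm} to Proposition \ref{de.} exactly as the $\eta=0$ version of the division statement was upgraded to general $\eta$ in the remark following Lemma \ref{division thm.0}. First I would invoke the identity (\ref{Pp}), namely $\mathcal{S}_p=\{e^\eta g\ |\ g\in\mathcal{S}_P\}$ coming from (\ref{eta}), so that the operator $p=(p_1,\cdots,p_r)$ with vanishing zeroth-order part plays the role of $P$. The key equivalence is (\ref{ee.}): solving $\iota_h^\star u=f$ for the $\star$-product attached to $P$ is the same as solving $\iota_{e^\eta h}(e^\eta u)=e^\eta f$ for the ordinary interior product attached to $p$. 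So set $\tilde h:=e^\eta h\in\Gamma(\Omega,\mathcal{S}_p)^{\oplus m}$ and $\tilde f:=e^\eta f\in\bigwedge^{\ell-1}\Gamma(\Omega,\mathcal{S}_p)^{\oplus m}$, and note $\iota_{\tilde h}\tilde f=0$ follows from $\iota_h^\star f=0$.

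Second I would check that the hypotheses of Proposition \ref{de.} hold for $p$ on $\Omega$. Ellipticity of $p$ is equivalent to that of $P$ since they have the same principal symbol; $P$-convexity of $\Omega$ is intrinsic to the principal parts $p_\alpha$ (visible already from (\ref{quadratic form}), which only involves $p_\alpha,\bar p_\beta$ and the $e_{\alpha\beta}^\gamma$, all unchanged), so $\Omega$ is $p$-convex; similarly a $P$-convex function $\phi$ is a $p$-convex function for the same reason — the defining quadratic forms $Q_{\phi_\iota,\mathfrak{p}}$ are literally the same. Also $|h|^2=e^{-2\eta}|\tilde h|^2$, so $\log|h|^2$ and $\log|\tilde h|^2$ differ by the smooth function $-2\eta$, hence the weight classes $\mathcal{I}^{\,\cdot}_{\phi+c\log|h|^2}$ for $P$ and the corresponding $\mathcal{I}^{\,\cdot}_{\psi+c\log|\tilde h|^2}$ for $p$ agree up to a bounded (locally) adjustment of the weight; I would carry the factor $e^{\mp\eta}$ through the $L^2$ conditions carefully, using that $\eta\in C^\infty(\Omega)$ is locally bounded, so that $f\in\Gamma(\Omega,\mathcal{I}^{\ell-1}_{\phi+(\lambda\ell\rho+1)\log|h|^2})$ translates into $\tilde f$ lying in the analogous sheaf for $p$ (up to replacing $\phi$ by $\phi+2(\lambda\ell\rho+1)\eta$ or simply absorbing the bounded term on relatively compact sets, which is all that membership in these sheaves requires).

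Third, apply Proposition \ref{de.} to $p$, $\Omega$, the $p$-convex function $\phi$ (suitably adjusted by the bounded term), the sections $\tilde h_i\in\Gamma(\Omega,\mathcal{S}_p)$, the same $\rho>1$ and the same $\lambda$ (the constant in Lemma \ref{la.} depends only on $m,\ell,r$, not on the operator), obtaining $\tilde u\in\Gamma(\Omega,\mathcal{I}^\ell_{\phi+\lambda\ell\rho\log|\tilde h|^2})$ with $\iota_{\tilde h}\tilde u=\tilde f$. Finally set $u:=e^{-\eta}\tilde u$; then $u\in\bigwedge^\ell\Gamma(\Omega,\mathcal{S}_P)^{\oplus m}$ by (\ref{Pp}), the relation $\iota_{\tilde h}\tilde u=\tilde f$ is by (\ref{ee.}) exactly $\iota_h^\star u=f$, and translating the weight back (again using $\eta$ locally bounded) yields $u\in\Gamma(\Omega,\mathcal{I}^\ell_{\phi+\lambda\ell\rho\log|h|^2})$, which finishes the proof.

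The only real content is Proposition \ref{de.} together with Lemma \ref{division thm.0}, both already established; so the expected main obstacle is purely bookkeeping — verifying that replacing $P$ by its principal part $p$ changes none of the structural hypotheses ($P$-convexity of $\Omega$, $P$-convexity of $\phi$, ellipticity, the constant $\lambda$) and that the change of weights by the bounded factors $e^{\pm\eta}$ and by $\eta$ itself is harmless for membership in the sheaves $\mathcal{I}^{\,\cdot}_{\,\cdot}$, which are defined by a local $L^2$ condition and hence insensitive to adding a locally bounded function to the weight. I would state this reduction as a short paragraph rather than reproving anything.
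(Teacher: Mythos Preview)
Your proposal is correct and follows essentially the same approach as the paper: reduce to Proposition~\ref{de.} applied to the principal part $p$ (for which $\eta=0$) with data $\tilde h=e^\eta h$, $\tilde f=e^\eta f$, obtain $\tilde u$, and set $u:=e^{-\eta}\tilde u$ using (\ref{ee.}) and the fact that $P$- and $p$-convexity coincide. The paper's proof is terser about the weight bookkeeping (it keeps $\log|h|^2$ rather than $\log|\tilde h|^2$, implicitly using that these differ by the locally bounded function $2\,\mathrm{Re}\,\eta$), but the argument is the same.
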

\begin{proof}
	%由于 $P$ 和 $p$ 的一阶部分相同, 因此 $\Omega$ 也是 $p$-凸的. 
	The equality (\ref{Pp}) gives $e^\eta h\in\Gamma(\Omega,{S}_p)^{\oplus m}$ and $$e^\eta f\in\Gamma\left(\Omega,\mathcal{I}_{\phi+\left(\lambda\ell\rho+1\right)\log|h|^2}^{\ell-1}\right).$$ Since $\Omega$ is $p$-convex, according to Proposition \ref{de.} we know that there exists some $\tilde{u}\in\Gamma\left(\Omega,\mathcal{I}_{p,\phi+\lambda\ell\rho\log|h|^2}^\ell\right)$ such that $\iota_{e^\eta h}\tilde{u}=e^\eta f$, where $\mathcal{I}_{p,\phi+\lambda\ell\rho\log|h|^2}^\ell$ is the subsheaf of $\bigwedge^\ell\mathcal{S}_{p}^{\oplus m}$ defined as Definition \ref{ideal}. Thus, $u:=e^{-\eta}\tilde{u}\in\Gamma\left(\Omega,\mathcal{I}_{\phi+\lambda\ell\rho\log|h|^2}^\ell\right)$  satisfies $\iota_h^\star u=f$.
\end{proof}

\section{A generalization of Nadel's coherence theorem}
In this section, we will show, under certain conditions, the $\left(\mathcal{S}_P,+,\star\right)$-coherence of the subsheaf $\mathcal{I}_\phi^\ell\subseteq\bigwedge^\ell\mathcal{S}_P^{\oplus m}$ for $0\leq\ell\leq m$. The following condition was introduced in \cite{JYY}
\begin{equation}\label{A2*}
	{\rm rank}_{\mathbb{C}}\left(a_\alpha^\sigma(\mathfrak{p})\right)_{1\leq \sigma\leq n\atop1\leq \alpha\leq r}=r\ \text{hold for all} \ \mathfrak{p}\in\Omega.
\end{equation}

For later use we need the following weighted $L^2$-estimate.
\begin{lemma}\label{ewe.}
	Besides (\ref{integrability condition}) and (\ref{condition of the Lie braket}), we also assume that $P$ satisfies (\ref{A2*}). Let $\mathfrak{p}_0\in \Omega$ be an arbitrary point, there exists a $P$-convex neighborhood $U\Subset\Omega$ of $\mathfrak{p}_0$  and a constant $C_U>0$ such that for any $P$-convex function $\phi$ in $U$ and $f\in L_\phi^2(U)^{\oplus\tbinom{r}{1}}$ satisfying $ \mathcal{P}_{2}f=0 $, one can find some $ u\in L^2_{\phi}(U)$ such that $ \mathcal{P}_{1}u=f $ and
	\begin{equation}\label{e1}
		\int_U|u|^2e^{-\phi}\leq C_U\int_U|f|^2e^{-\phi}.
	\end{equation}
\end{lemma}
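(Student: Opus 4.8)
The plan is to derive (\ref{e1}) from the standard Hörmander-type $L^2$-machinery that underlies the resolution constructed in \cite{JYY}, but with a crucial twist: we must get an estimate whose constant $C_U$ is \emph{independent of the weight} $\phi$. This is why $U$ is taken small and $P$-convex, and why condition (\ref{A2*}) is imposed. First I would invoke the observation (\ref{local}): since (\ref{A2*}) holds, after possibly shrinking $U$ about $\mathfrak{p}_0$ and choosing local coordinates, the quadratic form $Q_{\psi,\mathfrak{p}}$ of an auxiliary function $\psi$ is comparable to the (complex) Hessian of $\psi$ along the $r$ independent directions $a_\alpha^\sigma\partial_\sigma$; in particular one can fix, once and for all, a bounded strictly $P$-convex function $\psi_0\in C^2(U)$ (e.g. a suitably scaled $|x-\mathfrak{p}_0|^2$) with $Q_{\psi_0,\mathfrak{p}}(\xi,\xi)\ge |\xi|^2$ on $U$, and with $\sup_U|\psi_0|\le 1$. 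The point of the bound on $\psi_0$ is that replacing $\phi$ by $\phi+\psi_0$ changes the weight $e^{-\phi}$ only by a factor bounded above and below by $e^{\pm 1}$, so an estimate proved with the extra convexity $\psi_0$ built in immediately yields (\ref{e1}) with $C_U$ absorbing $e^{2}$ and losing nothing in the $\phi$-dependence.

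Next, as in the proof of Lemma \ref{division thm.0}, I would reduce to the case $\phi\in C^2(U)$ by the defining smooth-approximation property of $P$-convex functions (decreasing $\phi_\iota\to\phi$ with $Q_{\phi_\iota,\mathfrak{p}}\ge 0$), solving $\mathcal{P}_1 u_\iota=f$ with the estimate $\int_U|u_\iota|^2e^{-\phi_\iota}\le C_U\int_U|f|^2e^{-\phi_\iota}$ and then extracting a weak limit using monotone convergence on the right and Fatou/weak-compactness on the left; the operator $\mathcal{P}_1$ is the first differential operator of the complex (1.4) in \cite{JYY}, which is closed and densely defined, so a weak limit of solutions is again a solution. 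With $\phi$ smooth, the heart of the matter is the a priori inequality: for $g$ in the intersection of the relevant domains with $\mathcal{P}_2^{\star}$-component zero (i.e. $g\in\mathrm{Ker}\,\mathcal{P}_2\cap\mathrm{Dom}(\mathcal{P}_1^{*})$ in the appropriate dual picture), one has
\begin{align*}
	\Big|\int_U\langle f,g\rangle e^{-\phi}\Big|^2\le C_U\Big(\int_U|f|^2e^{-\phi}\Big)\,\|\mathcal{P}_1^{*}g\|_{\phi}^2 ,
\end{align*}
after which Lemma \ref{riesz} (with $R=0$, $T=\mathcal{P}_1$, $S=\mathcal{P}_2$, or rather its standard one-operator specialization) produces $u$ with $\mathcal{P}_1u=f$ and $\int_U|u|^2e^{-\phi}\le C_U\int_U|f|^2e^{-\phi}$. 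To prove the a priori inequality I would use the Bochner–Kodaira–Nakano–type identity established in \cite{JYY} (this is precisely Proposition 3.1 there, already quoted above in the form (\ref{de6})), which bounds below $\|\mathcal{P}_1^{*}g\|_\phi^2+\|\mathcal{P}_2 g\|_\phi^2$ by $\int_U\big(Q_{\phi,\mathfrak{p}}(g,g)-\kappa|g|^2\big)e^{-\phi}$ for some smooth $\kappa$ depending only on the coefficients of $P$ on $U$, \emph{not} on $\phi$; adding the built-in $Q_{\psi_0,\mathfrak{p}}(g,g)\ge|g|^2$ and shrinking $U$ so that $\kappa\le\tfrac12$ on $U$ makes the right-hand side $\ge\tfrac12\int_U|g|^2e^{-\phi}$, whence $\|\mathcal{P}_1^{*}g\|_\phi^2\ge\tfrac12\int_U|g|^2e^{-\phi}$ on $\mathrm{Ker}\,\mathcal{P}_2$, and Cauchy–Schwarz finishes it with $C_U=2e^{2}$.

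The main obstacle is exactly the $\phi$-uniformity of $C_U$: the curvature term $\kappa$ in the basic identity of \cite{JYY} must be controlled purely in terms of the structure functions $a_\alpha^\sigma, a_\alpha^0, d_{\alpha\beta}^\gamma, e_{\alpha\beta}^\gamma$ and their derivatives on the fixed small set $U$, so that it can be dominated by the fixed convexity $Q_{\psi_0,\mathfrak{p}}\ge|\xi|^2$ after shrinking $U$ — and one must check that the built-in $\psi_0$ can be chosen bounded so that the passage $e^{-\phi}\rightsquigarrow e^{-\phi-\psi_0}$ is a harmless bounded perturbation. A secondary point requiring care is that, because $\phi$ is only $P$-convex rather than strictly $P$-convex, it contributes nothing negative but also nothing positive to the lower bound; all the positivity must come from $\psi_0$, which is legitimate precisely because $U$ is bounded and we are allowed a constant $C_U$ that blows up as $U$ shrinks. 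Once these two points are in place, the smoothing/weak-limit argument and the Riesz lemma are routine, and (\ref{A2*}) enters only through (\ref{local}) to guarantee that such a $\psi_0$ exists on a neighborhood of $\mathfrak{p}_0$.
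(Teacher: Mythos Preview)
Your overall strategy is the same as the paper's: add to $\phi$ a bounded, strictly $P$-convex function built from $|x-\mathfrak{p}_0|^2$, invoke the basic inequality (Proposition~3.1 of \cite{JYY}) to get a uniform lower bound $\tfrac12\int_U|g|^2e^{-\phi}$ on $\mathrm{Ker}\,\mathcal{P}_2$, apply the Riesz-type lemma, and then pass from smooth $\phi$ to general $P$-convex $\phi$ by monotone approximation and weak limits. That is exactly what the paper does.

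There is, however, a genuine gap in one step. You write ``shrinking $U$ so that $\kappa\le\tfrac12$ on $U$''. This is false: the error term $\kappa$ in Proposition~3.1 of \cite{JYY} is a fixed smooth function of the structure coefficients $a_\alpha^\sigma,a_\alpha^0,c_{\alpha\beta}^\gamma,d_{\alpha\beta}^\gamma,e_{\alpha\beta}^\gamma$ and their derivatives; shrinking $U$ makes $\kappa$ close to $\kappa(\mathfrak{p}_0)$, which has no reason to be $\le\tfrac12$. The paper does \emph{not} try to make $\kappa$ small. Instead it first fixes a neighborhood on which $\kappa$ is bounded, then takes the auxiliary function to be $A|x(\mathfrak{p})|^2$ with $A$ large enough that $AQ_{|x|^2,\mathfrak{p}}(\xi,\xi)\ge(\kappa^2+1)|\xi|^2$; only afterwards does the boundedness of $A|x|^2$ on the relatively compact $U$ enter, yielding $C_U\sim 2e^{A\sup_U|x|^2}$. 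Your argument is easily repaired the same way: drop the normalization $Q_{\psi_0}\ge|\xi|^2$ with $\sup|\psi_0|\le 1$, and instead choose $\psi_0=A|x-\mathfrak{p}_0|^2$ with $A$ determined by $\sup_U\kappa$; then $\sup_U|\psi_0|$ is some finite constant (not $1$) that gets absorbed into $C_U$.

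A secondary point you gloss over is density of compactly supported forms in the graph norm, i.e.\ why the a~priori inequality, which one first proves for test forms, transfers to all of $\mathrm{Dom}(\mathcal{P}_1^{*})\cap\mathrm{Ker}\,\mathcal{P}_2$. The paper handles this via H\"ormander's three-weight device: it works on sublevel sets $U_c=\{\varphi<c\}$ of a $P$-convex exhaustion $\varphi$, introduces weights $\phi_1=\phi_3-2\tau$, $\phi_2=\phi_3-\tau$, $\phi_3=\phi+\chi(\varphi)+A|x|^2$ with $\tau$ vanishing on $U_c$ and $\chi$ vanishing on $(-\infty,c)$ (so that on $U_c$ all three collapse to $\phi+A|x|^2$), solves with these weights, and then takes a weak limit as $c\to\infty$. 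Your single-weight formulation would need an independent justification of this density, which is not automatic on a noncomplete domain.
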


\begin{proof}
	Without loss of generality, we assume $\mathfrak{p}_0=0$.  By (\ref{local}), we know that all of the eigenvalues of the quadratic form $Q_{|x(\mathfrak{p})|^2,\mathfrak{p}}$ are bounded from below by a positive constant on  each sufficiently small $P$-convex neighborhood $U\Subset\Omega$ of $0$.
	
	We first consider the case $\phi\in C^2(U)$. Since $U$ is $P$-convex, there exists an exhaustion function $\varphi\in C^2\left(U\right)$ such that $Q_{\varphi,\mathfrak{p}}$ is positive definite  at all $\mathfrak{p}\in U$.  Let $\lbrace\eta_\iota\rbrace_{\iota=1}^{\infty} \subseteq \mathcal{D}(U) $ be a sequence of real valued functions such that $ 0\leq\eta_\iota\leq1 $ for all $ \iota $, and $ \eta_\iota=1 $ on any compact subset of $ U $ when $ \iota $ is large.  For an arbitrarily given real number $c$, Lemma 3.3 in \cite{JYY} implies that there is a $\tau\geq0$ satisfying 
	\begin{align}\label{tau1}
		\tau|_{{}_{U_{c}}}=0\ \text{and}\ |\sigma(\mathcal{P}_{q})(\cdot,d\eta_\iota)|^2\leq e^{\tau},\ \iota=1,2,3,\cdots,
	\end{align}
	where $U_c:=\{\mathfrak{p}\in U\ |\ \varphi<c\}\Subset U.$
	
	We will apply Proposition 3.1 in \cite{JYY}  to the $P$-convex open subset $U$ with
	\begin{align*}
		\phi_3=\phi+\chi\left(\varphi\right)+A|x(\mathfrak{p})|^2,\ \phi_2=\phi_3-\tau,\ \phi_1=\phi_3-2\tau,
	\end{align*}
	where $\tau$ satisfies (\ref{tau1}), $\chi\geq0$ is a convex increasing smooth function satisfying $\chi|_{\left(-\infty,c\right)}\equiv0$ and $A\gg 0$ is a constant such that
	\begin{equation*}
		\left\{
		\begin{aligned}
			\phi_3-2\tau\geq\phi,\ AQ_{|x(\mathfrak{p})|^2,\mathfrak{p}}\left(\xi,\xi\right)\geq\left(\kappa^2+1\right)|\xi|^2,\\
			\chi'\left(\varphi\right)Q_{\varphi,\mathfrak{p}}\left(\xi,\xi\right)\geq 4|\sigma\left({}^t\mathcal{P}_q\right)\left(\mathfrak{p},d\tau\right)|^2|\xi|^2.
		\end{aligned}
		\right.
	\end{equation*}
	We can  choose such a constant $A$ because $\kappa^2$ can be bounded uniformly in $U\Subset\Omega$. Thus, it follows that for any $g\in{\rm Dom}\left(T^*\right)\cap{\rm Dom}\left(S\right)$
	\begin{align*}
		\frac{1}{2}\int_U|g|^2e^{-\phi_3}\leq\|T^*g\|_{\phi_1}^2+\|Sg\|_{\phi_3}^2.
	\end{align*}
	The Cauchy-Schwarz inequality implies	\begin{align*}
		|\left(f,g\right)_{\phi_2}|^2\leq\int_U|f|^2e^{-\phi}\int_U|g|^2e^{-\phi_3}\leq2\int_U|f|^2e^{-\phi}\left(\|T^*g\|_{\phi_1}^2+\|Sg\|_{\phi_3}^2\right),
	\end{align*}
	which gives 
	\begin{align}\label{fc.}
		|\left(f,g\right)_{\phi_2}|^2\leq2\int_U|f|^2e^{-\phi}\|T^*g\|_{\phi_1}^2\ {\rm for}\ g\in{\rm Dom}\left(T^*\right)\cap{\rm Ker}\left(S\right).
	\end{align}
	As $f\in{\rm Ker}\left(S\right)$, if $g\in{\rm Dom}\left(T^*\right)\cap{\rm Ker}\left(S\right)^\perp$ then $\left(f,g\right)_{\phi_2}=0$. %这里f属于H_2可以通过权函数大小关系(\ref{w.})比较得到
	Thus the estimate (\ref{fc.}) holds for any $g\in{\rm Dom}\left(T^*\right)$. According to Lemma 3.2 in \cite{JYY}, there exists some $u_c\in L_{\phi_1}^2(U)$ such that $\mathcal{P}_1u_c=f$ and
	\begin{align}\label{e.}
		\int_U|u_c|^2e^{-\phi_1}\leq2\int_U|f|^2e^{-\phi}.
	\end{align}
	Choosing a subsequence $\{c_\iota\}_{\iota=1}^\infty$ such that $\{u_{c_\iota}\}_{\iota=1}^\infty$ has a weak limit $u$, we know  by (\ref{e.})  that for any $c$,
	\begin{align*}
		\int_{U_c}|u|^2e^{-\phi-A|x(\mathfrak{p})|^2}=\int_{U_c}|u|^2e^{-\phi_1}\leq\overline{\lim}\int_U|u_{c_\iota}|^2e^{-\phi_1}\leq2\int_U|f|^2e^{-\phi},
	\end{align*}
	which gives 
	\begin{align*}
		\int_{U}|u|^2e^{-\phi}\leq C_U\int_U|f|^2e^{-\phi},
	\end{align*}
	where $C_U$ is the constant only depends on $U$.
	
	Now, we consider the general case where $\phi$ is not necessarily a $C^2$-function. According to the definition of $P$-convexity, on each $U_c\Subset\Omega$, $\phi$ is the limit of  a decreasing sequence  $\{\phi_\iota\}_{\iota=1}^\infty\subseteq C^2\left(U_c\right)$  of $P$-convex functions. Then by the first part in the proof we know that there is $u_{c,\iota}\in L_{\phi_\iota}^2(U_{c})$ such that $\mathcal{P}_1u_{c,\iota}=f$ in $U_{c}$ and
	\begin{align*}
		\int_{U_{c}}|u_{c,\iota}|^2e^{-\phi_\iota}\leq C_{U}\int_{U_c}|f|^2e^{-\phi_\iota}\leq C_{U}\int_U|f|^2e^{-\phi}.%另一方面这里不是C_U_c是因为上一式中的C_U只和A与|x|^2有关，而A与c无关，|x|^2在U_c上的最值关于c是单增的，所以C_U关于U的大小是单增的
	\end{align*}
	Thus, for any $\upsilon\in\mathbb{Z}_+$, there exists a weak limit $u_{c}$ of the sequence $\{u_{c,\iota}\}_{\iota=\upsilon}^\infty$ in $L^2\left(U_{c}\right)$ which is independent of $\upsilon$ such that $\mathcal{P}_1u_{c}=f$ and
	\begin{align*}
		\int_{U_{c}}|u_{c}|^2e^{-\phi_\upsilon}\leq\overline{\lim}\int_{U_{c}}|u_{c,\iota}|^2e^{-\phi_\upsilon}\leq\sup_{\iota>\upsilon}\int_{U_{c}}|u_{c,\iota}|^2e^{-\phi_\upsilon}\leq C_U\int_U|f|^2e^{-\phi}.%第一个不等号是因为在U_{c-1}上带权和不带权的范数等价，这里用到$\phi_\iota$在U_{c-1}上有界；第二个不等号是因为上极限=lim_{\iota'}sup_{\iota>\iota'}<=sup_{\iota>\iota_0}，因此对任意的\upsilon，选取\iota_0=\upsilon，则|u_\iota|^2e^{-\phi_\upsilon}\leq|u_\iota|^2e^{-\phi_\iota}被f的范数控制且与\iota无关！
	\end{align*}
%\begin{align*}
%	C\int_{U_{c-1}}|u_{c,\iota}|^2\leq\int_{U_{c}}|u_{c,\iota}|^2e^{-\phi_\iota}\leq C_{U}\int_{U_c}|f|^2e^{-\phi_\iota}\leq C_{U}\int_U|f|^2e^{-\phi}.%C只跟phi_1在U_{c-1}上的最值有关；；另一方面这里不是C_U_c是因为上一式中的C_U只和A与|x|^2有关，而A与c无关，|x|^2在U_c上的最值关于c是单增的，所以C_U关于U的大小是单增的
%\end{align*}
%	Thus, there exists a weak limit $u_{c-1}$ of the sequence $\{u_{c,\iota}\}_{\iota=1}^\infty$ in $U_{c-1}$ such that $\mathcal{P}_1u_{c-1}=f$ and for every $\upsilon\in\mathbb{Z}_+$
%	\begin{align*}
%		\int_{U_{c-1}}|u_{c-1}|^2e^{-\phi_\upsilon}\leq\overline{\lim}\int_{U_{c-1}}|u_{c,\iota}|^2e^{-\phi_\upsilon}\leq\sup_{\iota>\upsilon}\int_{U_{c-1}}|u_{c,\iota}|^2e^{-\phi_\upsilon}.%第一个不等号是因为在U_{c-1}上带权和不带权的范数等价，这里用到$\phi_\iota$在U_{c-1}上有界；第二个不等号是因为上极限=lim_{\iota'}sup_{\iota>\iota'}<=sup_{\iota>\iota_0}，因此对任意的\upsilon，选取\iota_0=\upsilon，则|u_\iota|^2e^{-\phi_\upsilon}\leq|u_\iota|^2e^{-\phi_\iota}被f的范数控制且与\iota无关！
%	\end{align*}
	It follows that%根据Levi渐升（降）定理
	\begin{align*}
		\int_{U_{c}}|u_{c}|^2e^{-\phi}\leq C_U\int_U|f|^2e^{-\phi}.
	\end{align*}
Hence the weak limit $u\in L^2_{loc}\left(\Omega\right)$ of $\{u_c\}_c$ satisfying $\mathcal{P}_1u=f$ and the desired estimate (\ref{e1}).
\end{proof}

\begin{remark}
	The above lemma holds for each $\mathcal{P}_q$ in \cite{JYY} for $1\leq q\leq r$. By using (\ref{local}), we can weaken the condition (\ref{A2*}) in Lemma \ref{ewe.} accordingly as follows:\\
	\begin{center}
		the assumption  (A2)  in \cite{JYY}  is fulfilled, and ${\rm rank}_{\mathbb{C}}\left(a_\alpha^\sigma(\mathfrak{p})\right)_{1\leq \sigma\leq n\atop1\leq \alpha\leq r}\geq r-q+1.$
	\end{center}
\end{remark}

As in the proof of the division theorem, we first restrict to the special case in which $\eta=0$.
\begin{lemma}\label{coherence 0.}
	Assume that $P=(P_1,\cdots,P_r)$ is an elliptic operator satisfying (\ref{A2*}) and $\phi$ is a $P$-convex function in $\Omega$. If $\eta=0$, then $\mathcal{I}_\phi^\ell$ is $\mathcal{S}_P$-coherent for each $0\leq\ell\leq m$.
\end{lemma}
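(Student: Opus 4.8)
The plan is to follow the classical Nadel strategy, adapting it to the $(\mathcal{S}_P,\star,+)$-module structure with $\eta=0$. Coherence is a local statement, so fix $\mathfrak{p}_0\in\Omega$ and work on a small $P$-convex neighborhood $U\Subset\Omega$ given by Lemma \ref{ewe.}. The Noetherian property of the local ring $\mathcal{S}_{P,\mathfrak{p}_0}$ (which holds because, by the complex Frobenius theorem as used in the proof of Proposition \ref{de.}, germs in $\mathcal{S}_P$ are holomorphic in the $z$-variables and locally constant in the $t$-variables, so $\mathcal{S}_{P,\mathfrak{p}_0}\cong\mathcal{O}_{r-s,0}$, a Noetherian ring) shows that the stalk $\mathcal{I}^\ell_{\phi,\mathfrak{p}_0}$ is finitely generated; choose generators $g_1,\dots,g_N\in\Gamma(U,\mathcal{I}^\ell_\phi)$ after shrinking $U$. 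I want to show that these generate $\mathcal{I}^\ell_{\phi,\mathfrak{q}}$ at every $\mathfrak{q}\in U$, and that the sheaf of relations among them is locally finitely generated; the second point follows from the first applied inductively (the relation sheaf is itself an $\mathcal{I}^\cdot$-type object, or one invokes the Oka/Noether argument together with the already-established coherence of $\mathcal{S}_P$ from \cite{JYY}), so the crux is the first point.

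For that I would run the Artin–Rees / Krull intersection argument in its analytic incarnation. Let $f\in\mathcal{I}^\ell_{\phi,\mathfrak{q}}$, represented on a small ball $B=B(\mathfrak{q},\varepsilon)\subseteq U$. Using the Noetherian-ness again, it suffices to show $f$ lies in the submodule generated by the $g_j$ modulo $\mathfrak{m}^k$ for every $k$, where $\mathfrak{m}$ is the maximal ideal of $\mathcal{S}_{P,\mathfrak{q}}$ — equivalently modulo the ideal $(\star$-powers of) the coordinate functions. The mechanism to pass from "$f \in (g_j) + \mathfrak{m}^k$ for all $k$" back to "$f\in(g_j)$" is Krull's intersection theorem in $\mathcal{S}_{P,\mathfrak{q}}$; the mechanism to establish "$f\in(g_j)+\mathfrak{m}^k$ for each fixed $k$" is the weighted $L^2$-solvability of the operator $\mathcal{P}_1$, i.e. Lemma \ref{ewe.}. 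Concretely, one writes the obstruction to dividing $f$ by $g_1,\dots,g_N$ modulo $\mathfrak{m}^k$ as a $\mathcal{P}_2$-closed datum, solves $\mathcal{P}_1 u = (\text{that datum})$ with the estimate \eqref{e1}, and — choosing the weight $\phi$ enlarged by a large multiple of $\log|x-\mathfrak{q}|^2$ (a $P$-convex modification, legitimate since $Q_{|x|^2,\cdot}>0$ by \eqref{local}) — forces the solution to vanish to order $k$ at $\mathfrak{q}$, so that the corrected combination $f-\sum c_j\star g_j$ lies in $\mathfrak{m}^{k}\cdot\mathcal{I}^\ell_\phi$. This is the standard Nadel induction: having $L^2$ estimates with a singular weight is exactly what lets one both solve the division equation and control the vanishing order.

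The main obstacle, and the place where genuine care is needed, is the algebraic input: one must be sure that $\mathcal{S}_{P,\mathfrak{q}}$ is a Noetherian local ring and that the $\star$-multiplication (rather than ordinary multiplication) does not disturb the Artin–Rees / Krull machinery. Since $\eta=0$ we have $\star=\cdot$, so this reduces to the claim that $\mathcal{S}_{P,\mathfrak{q}}\cong\mathcal{O}_{\mathbb{C}^{r-s},0}$ via the complex Frobenius coordinates of \eqref{B} — a point already implicit in the proof of Proposition \ref{de.} and which I would state cleanly as a lemma. A secondary technical point is that Lemma \ref{ewe.} is only a \emph{local} $L^2$-estimate on a fixed small $U$; but coherence is local, so this is exactly the right scope, and one applies it on the shrinking balls $B(\mathfrak{q},\varepsilon)$ with the modified weights. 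The passage from $C^2$-weights to general $P$-convex $\phi$ is handled by the same decreasing-approximation-and-weak-limit device used throughout the paper (cf. the proof of Lemma \ref{division thm.0} and the second half of the proof of Lemma \ref{ewe.}), so it adds no new difficulty.
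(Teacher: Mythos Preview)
Your overall plan --- Nadel's strategy via weighted $L^2$ estimates plus Artin--Rees/Krull --- is the right one and is what the paper does, but the execution has two gaps.

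The concrete one is the singular weight. You propose to add a large multiple of $\log|x-\mathfrak{q}|^2$ (ambient real coordinates) to $\phi$, justifying $P$-convexity by the positivity of $Q_{|x|^2,\cdot}$ from \eqref{local}. That inference is invalid: positivity of $Q$ for $|x|^2$ says nothing about $Q$ for $\log|x|^2$, and in fact when $s\geq 1$ the function $\log|x|^2=\log(|z|^2+|t|^2)$ is \emph{not} $P$-convex, since its real Hessian in the $t$-variables has a negative direction and those second derivatives enter $Q$ through the $\partial_{t_j}$-components of the $p_\alpha$. The paper instead uses $\log|z|^2$, where $z=(z_1,\dots,z_{r-s})$ are the \emph{complex} Frobenius coordinates, and establishes its $P$-convexity (the claim \eqref{convexity log}) by a direct computation that hinges on the structural identity $P_\alpha\overline{b_\beta^\nu}+e_{\alpha\beta}^\gamma\overline{b_\gamma^\nu}=0$ extracted from \eqref{condition of the Lie braket}. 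As the Remark after Proposition~\ref{coherent} stresses, $P$-convexity and plurisubharmonicity in the Frobenius $z$-coordinates do not agree in general, so this computation cannot be skipped.

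The second gap is structural. You pick finitely many generators $g_1,\dots,g_N$ of the stalk $\mathcal{I}^\ell_{\phi,\mathfrak{p}_0}$ and try, for nearby $\mathfrak{q}$, to write an arbitrary $f_\mathfrak{q}\in\mathcal{I}^\ell_{\phi,\mathfrak{q}}$ as $\sum c_j\star g_j$ modulo $\mathfrak{m}_\mathfrak{q}^k$ by ``solving a $\mathcal{P}_1$-equation for the obstruction''. But solving $\mathcal{P}_1u=\cdot$ produces a scalar $u$, not coefficients $c_j$; nothing in your mechanism lands inside the specific submodule $(g_1,\dots,g_N)_\mathfrak{q}$, so the Krull step has no input. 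The paper sidesteps this by taking $\mathcal{I}$ to be the ideal sheaf generated by \emph{all} of $L^2_\phi(U,\mathcal{S}_P)$: this is coherent by the Noetherian property of coherent subsheaves of the coherent sheaf $\mathcal{S}_P$ (\cite{GR}, p.~111), so it remains only to show $\mathcal{I}_\mathfrak{p}=\mathcal{I}_{\phi,\mathfrak{p}}$ for each $\mathfrak{p}$. For that one cuts off $f$ by a bump $\chi$ in the $z$-variables, solves $Pu=fP\chi$ on $U$ with weight $\phi+(\iota+\iota_0+r-s-1)\log|z|^2$, and sets $\tilde f=\chi f-u$; then $\tilde f\in L^2_\phi(U,\mathcal{S}_P)$ automatically, hence $\tilde f_\mathfrak{p}\in\mathcal{I}_\mathfrak{p}$ with no division needed, while the singular weight forces $u_\mathfrak{p}\in\mathfrak{m}_{P,\mathfrak{p}}^{\iota+\iota_0}$. (The paper also begins by reducing to $\ell=0$, $m=1$ via $\mathcal{I}_\phi^\ell=\bigwedge^\ell\mathcal{I}_\phi^{\oplus m}$, which your sketch omits.)
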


\begin{proof}
	When $\ell=0$ and $m=1$, $\mathcal{I}_{\phi}^0$ is an ideal sheaf of $\mathcal{S}_P$, we will simply denote $\mathcal{I}_{\phi}^0$ by $\mathcal{I}_{\phi}$ in this special case. By definition, we have $\mathcal{I}_{\phi}^\ell=\bigwedge^\ell\mathcal{I}_{\phi}^{\oplus m}$ which in turn implies that the coherence of $\mathcal{I}_{\phi}^\ell$  can be deduced from that of the ideal sheaf $\mathcal{I}_\phi$.	
	
	Now we begin to prove the coherence of ideal sheaf $\mathcal{I}_\phi$. With the estimate (\ref{e1}) at hand, we can proceed by employing Nadel's argument for the coherence of multiplier ideal sheaves of complex structures(see \cite{Nam90}). By the complex Frobenius theorem(\cite{N}), around every $\mathfrak{p}\in \Omega$ there is a  small coordinate chart
	$$(U;z,t):=(U;x_1+\sqrt{-1}y_1,\cdots, x_{r-s}+\sqrt{-1}y_{r-s}, t_1,\cdots, t_s),\ 2r-s=n$$
	on which conditions in Lemma \ref{ewe.} and (\ref{B}) are fulfilled. We claim that 
\begin{equation}\label{convexity log}\log|z|^2\ \text{is}\ P\text{-convex on}\ U,\end{equation}
where $|z|^2=|z_1|^2+\cdots +|z_{r-s}|^2.$ This is essentially a consequence of the assumption (\ref{condition of the Lie braket}). In fact, (\ref{B}) allows us to rewrite the differential operators $P_\alpha$'s as follows
$$P_\alpha\equiv b_\alpha^\mu\partial_{\bar{z}_\mu}\ {\rm mod}\big\{\partial_{t_1},\cdots,\partial_{t_s}\big\},$$
where $b_\alpha^\mu\in C^\infty\left(U\right), 1\leq\mu\leq r-s, 1\leq\alpha\leq r.$ Substituting the above expressions of $P_\alpha$'s into (\ref{condition of the Lie braket}) and comparing coefficients of $\partial_{\bar{z}_\nu}$, we obtain
\begin{equation}\label{pbeb}
	P_\alpha\overline{b_\beta^\nu}+e_{\alpha\beta}^\gamma\overline{b_\gamma^\nu}=0
\end{equation}
for $1\leq \alpha,\beta\leq r, 1\leq \nu\leq r-s$. Let $\epsilon >0$ be a constant, we know by (\ref{pbeb}) 
\begin{align*}
	&\left(P_\alpha\bar{P}_\beta+e_{\alpha\beta}^\gamma\bar{P}_\gamma\right)\log\left(|z|^2+\epsilon\right)\\ =&P_\alpha\bigg(\frac{1}{|z|^2+\epsilon}\overline{b_\beta^\nu z_\nu}\bigg)+e_{\alpha\beta}^\gamma\frac{\overline{b_\gamma^\nu z_\nu}}{|z|^2+\epsilon}\\ =&b_\alpha^\mu\overline{b_\beta^\nu}\frac{\delta_{\mu\nu}\left(|z|^2+\epsilon\right)-z_\mu\bar{z}_\nu}{\left(|z|^2+\epsilon\right)^2}+\frac{1}{|z|^2+\epsilon}\left(P_\alpha\overline{b_\beta^\nu}+e_{\alpha\beta}^\gamma\overline{b_\gamma^\nu}\right)\bar{z}_\nu\\
	=&b_\alpha^\mu\overline{b_\beta^\nu}\frac{\delta_{\mu\nu}\left(|z|^2+\epsilon\right)-z_\mu\bar{z}_\nu}{\left(|z|^2+\epsilon\right)^2},
\end{align*}
and therefore $\log|z|^2$, as the decreasing limit of $\log\left(|z|^2+\epsilon\right)$, is a $P$-convex function on $U$.

As another consequence of (\ref{B}), each $f_\mathfrak{p}\in\mathcal{S}_{P,\mathfrak{p}}$ is a germ of holomorphic functions in variables $z_\mu$$(1\leq\mu\leq r-s)$, and  $\mathcal{S}_P$ is thus a coherent sheaf of rings. Set $L_{\phi}^2\left(U,\mathcal{S}_P\right)=L_{\phi}^2(U)\cap\Gamma\left(U,\mathcal{S}_P\right)$, it follows from the Noetherian property of coherent sheaves(\cite{GR}, page 111) that $L_{\phi}^2\left(U,\mathcal{S}_P\right)$ generates a coherent ideal sheaf $\mathcal{I}\subseteq\mathcal{S}_P$. Obviously, we have $\mathcal{I}\subseteq\mathcal{I}_\phi$. In order to prove the reverse, according to Krull's Intersection Lemma(c.f. \cite{AM69}), it suffices to prove for every $\iota\in\mathbb{Z}_{\geq 0}, \ \mathcal{I}_{\mathfrak{p}}+\mathfrak{m}_{P,\mathfrak{p}}^{\iota}\mathcal{I}_{P,\mathfrak{p}}\supseteq\mathcal{I}_{P,\mathfrak{p}}$ where $\mathfrak{p}\in U$ and $\mathfrak{m}_{P,\mathfrak{p}}$ is the maximal ideal of $\mathcal{S}_{P,\mathfrak{p}}$.  In view of the Artin-Rees Lemma(see \cite{AM69}), there exists an integer $\iota_0\gg 0$ such that 
$\mathfrak{m}_{P,\mathfrak{p}}^\iota\cap\mathcal{I}_{P,\mathfrak{p}}=\mathfrak{m}_{P,\mathfrak{p}}^{\iota-\iota_0}\left(\mathfrak{m}_{P,\mathfrak{p}}^{\iota_0}\cap\mathcal{I}_{P,\mathfrak{p}}\right)\subseteq\mathfrak{m}_{P,\mathfrak{p}}^{\iota-\iota_0}\mathcal{I}_{P,\mathfrak{p}}$ for all $\iota\geq\iota_0$, it remains to show 
\begin{equation}\label{desired}
	\mathcal{I}_{\mathfrak{p}}+\mathfrak{m}_{P,\mathfrak{p}}^{\iota+\iota_0}\cap\mathcal{I}_{P,\mathfrak{p}}\supseteq\mathcal{I}_{P,\mathfrak{p}}\ {\rm for}\ \iota\in\mathbb{Z}_{\geq 0}.
\end{equation}
	
	For each $f_\mathfrak{p}\in\mathcal{I}_{P,\mathfrak{p}}$, there is, by definition, a neighborhood $V\subseteq U$ of $\mathfrak{p}$ such that $f\in L^2_\phi(V,\mathcal{S}_P)$. For simplicity, we identify $U$ with an open subset of $\mathbb{C}^{r-s}\times\mathbb{R}^s$ and $\mathfrak{p}=(\mathfrak{p}',\mathfrak{p}'')\in V=V'\times V''\subseteq\mathbb{C}^{r-s}\times\mathbb{R}^s$ is the origin. Let $\chi\in \mathcal{D}(V')$ be a compactly supported function which is identically equal to 1 in a small neighborhood of $\mathfrak{p}'$.  As previously mentioned, $f$ is independent of $(t_1,\cdots,t_s)\in V''$. So, it's clear that $fP\chi$ has a natural smooth extension to $U$ and for any $\iota\in\mathbb{Z}_+$
	\begin{align*}
		\int_U |fP\chi&|^2|z|^{-2\left(\iota+\iota_0+r-s-1\right)}e^{-\phi}  \lesssim\int_{V}|f|^2e^{-\phi}<+\infty, \\
		&\mathcal{P}_2\left(fP\chi\right)=\mathcal{P}_2\circ\mathcal{P}_1(f\chi)=0.
	\end{align*}
	Note that $r-s\geq 0$, (\ref{convexity log}) implies that the following function is $P$-convex function on $U$ for any $\iota\in\mathbb{Z}_{\geq 0}$ $$\phi+\left(\iota+\iota_0+r-s-1\right)\log|z|^2.$$
	Consequently, Lemma \ref{ewe.} gives some $ u\in L^2_{\phi}(U)$ solving $Pu= fP\chi$ with
	\begin{align}\label{eq.}
		\int_U |u|^2|z|^{-2\left(\iota+\iota_0+r-s-1\right)}e^{-\phi}&\lesssim\int_U |fP\chi|^2|z|^{-2\left(\iota+\iota_0+r-s-1\right)}e^{-\phi}<+\infty.
	\end{align}
	Set $\tilde{f}:=\chi f-u$, then $\tilde{f}\in L_{\phi}^2\left(U,\mathcal{S}_P\right)$  which implies $\tilde{f}_\mathfrak{p}\in\mathcal{I}_{\mathfrak{p}}$. As $\chi$ is identically equal to 1 in a neighborhood of $\mathfrak{p}'$,  we also have $u_\mathfrak{p}\in\mathcal{I}_{P,\mathfrak{p}}$. Moreover, (\ref{eq.}) implies $u_\mathfrak{p}\in\mathfrak{m}_{P,\mathfrak{p}}^{\iota+\iota_0}$. In summary, we have proved for any $\iota\in\mathbb{Z}_{\geq 0}$
	$$f_\mathfrak{p}=\tilde{f}_{\mathfrak{p}}+u_\mathfrak{p}\in  \mathcal{I}_{\mathfrak{p}}+\mathfrak{m}_{P,\mathfrak{p}}^{\iota+\iota_0}\cap\mathcal{I}_{P,\mathfrak{p}},$$
	and therefore the desired relation (\ref{desired}) follows.
	%Let $F_\mathfrak{p}\in$ L.H.S. of (\ref{division-krull}), choose a $P$-convex neighborhood $\omega\subseteq\Omega$ of $\mathfrak{p}$ such that $F\in L^2_\phi(\omega,\mathcal{S}_P). When $\tau>1$ is close to $1$, we have the following estimate near the origin
%\begin{align*}\frac{|F|^2}{\big(|z_1^\iota|^2+\cdots+|z_{r-s}^\iota|^2\big)^{(r-s-1)\tau+1}}&\lesssim \frac{|F|^2}{\big(|z_1|^2+\cdots+|z_{r-s}|^2\big)^{\iota\big((r-s-1)\tau+1\big)}}\\
%&\lesssim |z|^{2\big((\iota-\iota\tau-1)(r-s)+1+\iota(\tau-1)\big)}\lesssim |z|^{2(s-r)+1}. 
%\end{align*} 
%This implies that $F\in\Gamma\left(\omega,\mathcal{I}_{\phi+\iota\left(\tau\left(r-s-1\right)+1\right)\log|z|^2}\right), then applying Theorem \ref{division thm} to the generators $z_1^\iota,\cdots,z_{r-s}^\iota$ on $\omega$, one can find $F_1,\cdots,F_{r-s}\in\Gamma\left(\omega,\mathcal{I}_\phi\right)$ such that $F=F_1z_1^\iota+\cdots+F_{r-s}z_{r-s}^\iota$ holds on $\omega$. In particular, $F_\mathfrak{p}\in$ R.H.S. of (\ref{division-krull}), and thus we have finished the proof of the desired  (\ref{division-krull}).
\end{proof}

When $\Omega$ is a $P$-convex domain, we can remove the assumption $\eta=0$ from Lemma \ref{coherence 0.} and reach the coherence of $\mathcal{I}_\phi^\ell$ in general which extends Nadel's coherence theorem from complex structures to overdetermined systems of partial differential equations.
\begin{prop}\label{coherent}
	Let $P=(P_1,\cdots,P_r)$ be an elliptic operator satisfying (\ref{A2*}), $\Omega$ be a $P$-convex domain. Then for every $P$-convex function $\phi$ on $\Omega$, the $\left(\mathcal{S}_P,+,\star\right)$-subsheaf $\mathcal{I}_\phi^\ell$ of $\bigwedge^\ell\mathcal{S}_P^{\oplus m}$ is coherent for $0\leq\ell\leq m$.
\end{prop}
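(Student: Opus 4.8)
The plan is to reduce the general case to the special case $\eta=0$ already treated in Lemma \ref{coherence 0.}, exactly in the spirit of the passage from Lemma \ref{division thm.0} to Theorem \ref{division thm}. The key structural fact is the identification (\ref{Pp}), namely $\mathcal{S}_p=\{e^\eta g\mid g\in\mathcal{S}_P\}$, where $p=(p_1,\dots,p_r)$ is the principal-part operator. First I would observe that since $\Omega$ is $P$-convex, there is a global $\eta\in C^\infty(\Omega)$ with $P_\alpha=e^{-\eta}p_\alpha(e^\eta\,\cdot\,)$, and that $p$ is again an elliptic operator satisfying the same bracket conditions (\ref{integrability condition}), (\ref{condition of the Lie braket}) and the rank condition (\ref{A2*}), since these involve only the principal symbols $a_\alpha^\sigma$; moreover $\Omega$ is $p$-convex because $p$-convexity of a domain depends only on the $e_{\alpha\beta}^\gamma$ and the $a_\alpha^\sigma$, which are unchanged. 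Likewise, a function $\phi$ is $P$-convex if and only if it is $p$-convex, since the quadratic form (\ref{quadratic form}) is built from the principal parts and the $e_{\alpha\beta}^\gamma$'s only.

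Next I would set up the isomorphism of sheaves. Multiplication by $e^\eta$ gives an isomorphism of sheaves of rings $\left(\mathcal{S}_P,\star,+\right)\xrightarrow{\ \sim\ }\left(\mathcal{S}_p,\cdot,+\right)$, $g\mapsto e^\eta g$, where on the target the product is the ordinary pointwise product; indeed $e^\eta(u\star v)=e^\eta(e^\eta uv)=(e^\eta u)(e^\eta v)$. Taking $m$-fold direct sums and $\ell$-th exterior powers, this extends to an isomorphism $\bigwedge^\ell\mathcal{S}_P^{\oplus m}\xrightarrow{\ \sim\ }\bigwedge^\ell\mathcal{S}_p^{\oplus m}$, $f\mapsto e^\eta f$, compatible with the module structures. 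Under this isomorphism the subsheaf $\mathcal{I}_\phi^\ell\subseteq\bigwedge^\ell\mathcal{S}_P^{\oplus m}$ is carried onto a corresponding subsheaf of $\bigwedge^\ell\mathcal{S}_p^{\oplus m}$: a germ $f$ has $|f|^2e^{-\phi}$ locally integrable precisely when $|e^\eta f|^2e^{-(\phi+2\eta)}$ is locally integrable, and since $\eta\in C^\infty(\Omega)$ is locally bounded, $\phi+2\eta$ is again a $p$-convex function (adding a $C^2$ function with vanishing quadratic form contribution — note $Q_{2\eta,\mathfrak p}$ need not vanish, so more precisely one uses that $\phi+2\eta$ is $p$-convex because $p_\alpha\bar p_\beta(2\eta)+e_{\alpha\beta}^\gamma\bar p_\gamma(2\eta)$ is a fixed smooth quadratic form that can be absorbed, or one simply notes $\phi$ and $\phi+2\eta$ generate the same integrability condition up to the fixed smooth factor $e^{-2\eta}$ which is bounded above and below on compacts). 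Thus the image of $\mathcal{I}_\phi^\ell$ is exactly $\mathcal{I}_{p,\phi+2\eta}^\ell$ in the notation of Theorem \ref{division thm}'s proof — or, what is cleaner and avoids the $p$-convexity technicality, one works directly: $e^\eta\cdot\mathcal{I}_{\phi}^\ell$ is the sheaf of germs $g\in\bigwedge^\ell\mathcal{S}_p^{\oplus m}$ with $|g|^2e^{-\phi}e^{-2\eta}$ locally integrable, and since $e^{-2\eta}$ is a smooth positive function this is the same as requiring $|g|^2e^{-\psi}$ locally integrable for $\psi=\phi+2\eta$, a $p$-convex function when $\phi$ is $P$-convex.

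Then I would apply Lemma \ref{coherence 0.} with $P$ replaced by $p$ (which has $\eta=0$) and $\phi$ replaced by $\psi=\phi+2\eta$: this yields that $\mathcal{I}_{p,\psi}^\ell$ is $\mathcal{S}_p$-coherent. Transporting back along the ring isomorphism $e^{-\eta}\colon\mathcal{S}_p\xrightarrow{\sim}\mathcal{S}_P$, coherence is preserved — a finite local presentation $\mathcal{S}_p^{\oplus a}\to\mathcal{S}_p^{\oplus b}\to\mathcal{I}_{p,\psi}^\ell\to 0$ pulls back to a finite local presentation of $\mathcal{I}_\phi^\ell$ over $\mathcal{S}_P$ — and we conclude that $\mathcal{I}_\phi^\ell$ is $\left(\mathcal{S}_P,+,\star\right)$-coherent for every $0\le\ell\le m$.

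I do not expect a genuine obstacle here; the proposition is a formal consequence of Lemma \ref{coherence 0.} via the $e^\eta$-twist. The only point requiring a line of care is the bookkeeping in the previous paragraph: verifying that $\psi=\phi+2\eta$ is again a $p$-convex function (equivalently, that $P$-convexity of $\phi$ is unaffected by the smooth twist), and that the weight $e^{-2\eta}$ with $\eta\in C^\infty(\Omega)$ is harmless for \emph{local} $L^2$-integrability conditions because it is bounded away from $0$ and $\infty$ on every compact set. With that settled, the three-line argument above finishes the proof.
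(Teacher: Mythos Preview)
Your overall strategy---transport via the ring isomorphism $e^\eta:(\mathcal{S}_P,\star,+)\xrightarrow{\ \sim\ }(\mathcal{S}_p,\cdot,+)$ and then invoke Lemma \ref{coherence 0.} for the principal-part operator $p$---is exactly the paper's. There is, however, one point to clean up: your stated plan to apply Lemma \ref{coherence 0.} with the weight $\psi=\phi+2\eta$ does not work as written, because $\eta$ need not be real-valued, and even $\phi+2\,\mathrm{Re}\,\eta$ need not be $p$-convex (the quadratic form $Q_{2\,\mathrm{Re}\,\eta,\mathfrak p}$ has no reason to be nonnegative, so the hypothesis of Lemma \ref{coherence 0.} may fail). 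The paper sidesteps this by applying Lemma \ref{coherence 0.} to $p$ with the \emph{original} weight $\phi$---which is $p$-convex precisely because $P$-convexity depends only on the principal parts---to obtain the $\mathcal{S}_p$-coherence of $\mathcal{I}_{p,\phi}$, and then observing
\[
\mathcal{I}_{p,\phi}=\{e^\eta g\mid g\in\mathcal{I}_{\phi-2\,\mathrm{Re}\,\eta}\}=\{e^\eta g\mid g\in\mathcal{I}_\phi\},
\]
the second equality holding because $e^{\pm 2\,\mathrm{Re}\,\eta}$ is smooth and locally bounded above and below, hence irrelevant for \emph{local} $L^2$-integrability. You already identified this last fact; using it at this stage eliminates any need to discuss the convexity of $\psi$. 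Pulling back a local finite presentation of $\mathcal{I}_{p,\phi}$ along $e^\eta$ then yields the $(\mathcal{S}_P,\star,+)$-coherence of $\mathcal{I}_\phi$, and the general case follows from $\mathcal{I}_\phi^\ell=\bigwedge^\ell\mathcal{I}_\phi^{\oplus m}$ (the paper records this reduction to $\ell=0$, $m=1$ explicitly).
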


\begin{proof}
	As we have explained in the proof of Lemma \ref{coherence 0.}, it suffices to prove the result for $\ell=0$ and $m=1$. Lemma \ref{coherence 0.} means that for any $\mathfrak{p}\in\Omega$, there exists local neighborhood $U$ of $\mathfrak{p}$ such that the following $\mathcal{S}_{p}$-sequence exact
	\begin{align}\label{c1}
		\mathcal{S}_p|_U^{\oplus a}\longrightarrow\mathcal{S}_p|_U^{\oplus b}\longrightarrow\mathcal{I}_{p,\phi}|_U\longrightarrow0,\quad 1\leq a,b<+\infty,
	\end{align}
	where $\mathcal{I}_{p,\phi}$ is the ideal sheaf of $\mathcal{S}_{p}$ as in the proof of Lemma \ref{coherence 0.}. By the ellipticity assumption of $P$ we know that (\ref{Pp}) holds for some $\eta\in C^\infty\left(\Omega\right)$ and therefore
	\begin{align}\label{I}
		\mathcal{I}_{p,\phi}=\{e^\eta g\ |\ g\in\mathcal{I}_{\phi-2{\rm Re}\eta}\}=\{e^\eta g\ |\ g\in\mathcal{I}_{\phi}\}.
	\end{align}
	It follows from (\ref{Pp}) and (\ref{I}) that the exact sequence (\ref{c1}) is equivalent to the following exact $\left(\mathcal{S}_P,+,\star\right)$-sequence
	\begin{align*}
		\mathcal{S}_P|_U^{\oplus a}\longrightarrow\mathcal{S}_P|_U^{\oplus b}\longrightarrow\mathcal{I}_\phi|_U\longrightarrow0,\quad 1\leq a,b<+\infty.
	\end{align*}
	The proof is thus complete.
\end{proof}

\begin{remark}
	(1) When the system of differential operators $\{P_1,\cdots,P_r\}$ defines a complex structure, then Proposition \ref{coherent} recovers Nadel's coherence theorem(see \cite{Nam90}).
	
	(2) Although the coherence of $\mathcal{S}_P$ follows from that of complex structures, we can not simply deduce Proposition \ref{coherent} from Nadel's coherence theorem for complex structures. The reasons are twofold as follows.
	
	First, we observe that the $P$-convexity does not imply, in general,  the plurisubharmonicity. In fact, let $\phi\in C^2(\Omega)$ be a $P$-convex function, and $(U;x_1,\cdots,x_{r-s},y_1,\cdots,y_{r-s},t_1,\cdots,t_s)$ be a coordinate chart satisfying (\ref{B}), then we have the following on $U$ 
	\begin{align*}
		\partial_{\bar{z}_\mu}=b_\mu^\alpha p_\alpha,
	\end{align*}
	where $p_\alpha$ is the principal part of $P_\alpha$ and $b_\mu^\alpha\in C^\infty\left(\Omega\right),1\leq\mu\leq r-s,1\leq\alpha\leq r$, then for all $1\leq \mu,\nu\leq r-s$
	\begin{align*}
		0=\left[\partial_{\bar{z}_\mu},\partial_{z_\nu}\right]=b_\mu^\alpha\left(p_\alpha\overline{b_\mu^\beta}\right)\bar{p}_\beta-\overline{b_\nu^\beta}\left(\bar{p}_\beta b_\mu^\alpha\right)p_\alpha+b_\mu^\alpha\overline{b_\nu^\beta}\left(d_{\alpha\beta}^\gamma p_\gamma-e_{\alpha\beta}^\gamma\bar{p}_\gamma\right).
	\end{align*}
	Thus,
	\begin{align*}
		\partial_{\bar{z}_\mu}\partial_{z_\nu}\phi&=b_\mu^\alpha p_\alpha\left(\overline{b_\nu^\beta}\bar{p}_\beta\phi\right)\\
		&=b_\mu^\alpha\left(p_\alpha\overline{b_\nu^\beta}\right)\bar{p}_\beta\phi+b_\mu^\alpha\overline{b_\nu^\beta}p_\alpha\bar{p}_\beta\phi\\
		&=b_\mu^\alpha\overline{b_\nu^\beta}\left(p_\alpha\bar{p}_\beta\phi+e_{\alpha\beta}^\gamma\bar{p}_\gamma\phi\right)+\overline{b_\nu^\beta}\left(\bar{p}_\beta b_\mu^\alpha\right)p_\alpha\phi-b_\mu^\alpha\overline{b_\nu^\beta}d_{\alpha\beta}^\gamma p_\gamma\phi.
	\end{align*}
	The difference of two quadratic forms with respect to $P$ and $(\partial_{\bar{z}_1},\cdots\partial_{\bar{z}_{r-s}})$ is given by $\overline{b_\nu^\beta}\left(\bar{p}_\beta b_\mu^\alpha\right)p_\alpha\phi-b_\mu^\alpha\overline{b_\nu^\beta}d_{\alpha\beta}^\gamma p_\gamma\phi$ which causes the ambiguity.
	
	In spite of the difference of convexities defined by $P$ and $(\partial_{\bar{z}_1},\cdots\partial_{\bar{z}_{r-s}})$ respectively, there is another obstruction for deducing Lemma \ref{coherence 0.} from Nadel's coherence theorem. Indeed, for any $\mathfrak{p}\in\Omega$ and $f\in\mathcal{I}_{P,\mathfrak{p}}$ there is  coordinate neighborhood of $\mathfrak{p}$ as above such that $f\in L_\phi^2\left(U,\mathcal{S}_P\right)$, we identify $U$ with an open subset $U'\times U''\subseteq\mathbb{C}^{r-s}\times\mathbb{R}^s$ and write $\mathfrak{p}=(\mathfrak{p}',\mathfrak{p}'')\in U'\times U''$. Even if $\phi(\cdot,t)$ is a plurisubharmonic function in $U'$ for an arbitrarily fixed $t=(t_1,\cdots,t_{s})\in U''$, Nadel's coherence theorem yields local generators $s_{1,t},\cdots,s_{b,t}\in\Gamma\left(V,\mathcal{O}_{\mathbb{C}^{r-s}}\right)$ depending on $t$ such that
	\begin{align*}
		\mathcal{I}_{\phi\left(\cdot,t\right),z}=\mathcal{O}_{V,z}s_{1,t}(z)+\cdots+\mathcal{O}_{V,z}s_{b,t}(z),\quad z\in V,
	\end{align*}
	where $V\subseteq U'$ is a neighborhood of $\mathfrak{p}'$. While the $\mathcal{S}_P$-coherence of $\mathcal{I}_\phi$ requires that all the generators are independent of $t$.
\end{remark}

\end{document}